\numberwithin{equation}{section}
\newtheorem{Theorem}{Theorem}[section]
\newtheorem*{Theorem*}{Theorem}
\newtheorem{Corollary}[Theorem]{Corollary}
\newtheorem{Lemma}[Theorem]{Lemma}
\newtheorem{Proposition}[Theorem]{Proposition}
 { \theoremstyle{definition}
\newtheorem{Definition}[Theorem]{Definition}
\newtheorem*{Note}{Note}
\newtheorem{Example}[Theorem]{Example}
 }
\def\rlwd{.4pt}
\def\rlht{1.1pt}
\def\shatvrule{\rule{\rlwd}{\rlht}}
\def\shat#1{%
 \ThisStyle{%
 \setbox0=\hbox{$\SavedStyle#1$}%
 \stackon[0pt]{\stackon[1pt]{\ensuremath{\SavedStyle#1}}{%
 \shatvrule\kern\wd0\kern-\rlwd\kern-\rlwd\shatvrule}}%
 {\rule{\wd0}{\rlwd}}%
 }%
}
\DeclareMathOperator*{\clim}{c-lim}
\newcommand{\dd}[1]{\Delta^{^{(#1)}}}
\newcommand{\tdd}[1]{\widetilde{\Delta}^{^{(#1)}}}
\newcommand{\prol}[3]{\shat{#1}^{^{(#2)}}}
\newcommand{\proltd}[3]{\shat{\widetilde{#1}}^{^{(#2)}}}
\newcommand{\ulk}[3]{\shat{#1}^{^{(#2)}}_{#3}}
\newcommand{\dep}{\left.\frac{{\rm d}}{{\rm d}\varepsilon}\right|_{\varepsilon = 0}}
\newcommand{\ddx}{\frac{\Delta}{\Delta x}}
\newcommand{\ddf}[1]{\frac{\Delta}{\Delta x}\left[ #1 \right]}
\newcommand{\ddfn}[2]{\frac{\Delta^{^{(#2)}}}{\Delta x^{^{(#2)}}}\left[ #1 \right]}
\newcommand{\sftn}[2]{S^{#2}\left[ #1 \right]}
\newcommand{\sft}[1]{S\left[ #1 \right]}
\newcommand{\xilk}[3]{\prol{#1 x^{#2}\xi}{#3}{#3}}
\newcommand{\ev}[2]{\left. #1\right|_{#2}}
\begin{document}
\allowdisplaybreaks

\renewcommand{\thefootnote}{}

\newcommand{\arXivNumber}{2306.13175}

\renewcommand{\PaperNumber}{002}

\FirstPageHeading

\ShortArticleName{Computation of Infinitesimals for a Group Action on a~Multispace}

\ArticleName{Computation of Infinitesimals for a Group Action\\ on a Multispace of One Independent Variable\footnote{This paper is a~contribution to the Special Issue on Symmetry, Invariants, and their Applications in honor of Peter J.~Olver. The~full collection is available at \href{https://www.emis.de/journals/SIGMA/Olver.html}{https://www.emis.de/journals/SIGMA/Olver.html}}}

\Author{Peter ROCK}

\AuthorNameForHeading{P.~Rock}

\Address{Department of Mathematics, University of Colorado Boulder,\\
395 UCB, Boulder, CO~80309, USA}
\Email{\href{mailto:peter.rock@colorado.edu}{peter.rock@colorado.edu}}

\ArticleDates{Received July 04, 2023, in final form December 29, 2023; Published online January 02, 2024}

\Abstract{This paper expands upon the work of Peter Olver's paper [\textit{Appl. Algebra Engrg. Comm. Comput.} \textbf{11} (2001), 417--436], wherein Olver uses a~moving frames approach to examine the action of a group on a curve within a generalization of jet space known as multispace. Here we seek to further study group actions on the multispace of curves by computing the infinitesimals for a given action. For the most part, we proceed formally, and produce in the multispace a recursion relation that closely mimics the previously known prolongation recursion relations for infinitesimals of a group action on jet space.}

\Keywords{jet space; multispace; symmetry methods; differential equations; numerical or\-di\-nary differential equations}

\Classification{53A70; 58D19}

\renewcommand{\thefootnote}{\arabic{footnote}}
\setcounter{footnote}{0}

\section{Introduction}

In STEM fields, we are often interested in studying solutions to various types of differential equations
(DEs). To do this, we usually create computer models that predict the behavior of solutions to
particular DEs with prescribed boundary conditions. Then, these predictions can be compared to a known data set
and the resulting differences between the sampled and predicted data analyzed to help determine the correctness
of the model. While this has generally served us
well, the unfortunate truth is that the computers we use to construct such models are fundamentally limited.

Put bluntly, the problem comes down to one foundational question: how do we deal with imperfect
data for a solution to a DE? Up until this point, one of the most effective methods for studying solutions
to DEs
has been to study the set of symmetries of the solutions to the given DE under a given
group action. In fact, this was one of the favored approaches of Cartan for determining the existence of solutions to
DEs. More recently, many of Cartan's methods were modernized by Peter Olver and Mark Fels in their seminal
papers on moving frames (see~\cite{fels1998moving, fels1999moving}). So, it is then natural to ask what happens when we
try to adapt this modern approach to Cartan's methods to something a bit more discrete? This is the topic of~\cite{olver2001joint}, where the multispace of curves in a manifold is first defined.

The goal behind the construction of multispace was to generalize the notion of jet spaces. Specifically,
multispace was designed to
allow for the imposition of different types of contact conditions at multiple points along a solution curve.
Intuitively, this makes sense as an approximation method. For any set of contact conditions at any
finite set of points along a curve, we can construct a polynomial meeting those contact conditions. Thus
we can locally approximate the solution curve using the constructed interpolating polynomial curve. This
polynomial curve can then be used as the representative for the equivalence class of all smooth curves
meeting the desired contact conditions. The idea, then, is to study how groups act on these (multicontact)
equivalence classes of curves by studying how they act on their polynomial representatives. We then wish to
see if the symmetry relationships that we know for the smooth curves under a given group action can be
expanded in some meaningful way to their entire multicontact equivalence class.

In \cite{olver2001geometric}, Olver determined how to describe the action of the group on the entire
multispace and determined how to obtain what he termed ``multiinvariants'' for the action. He did not,
however, examine how to prolong such actions on equivalence classes of curves within multispace, and that is
what we seek to do in this paper. To this end, there are several natural questions to ask:
\begin{enumerate}\itemsep=0pt
 \item[(i)] How does the discrete analogue for the prolongation of a group action behave when it is restricted to curves?%\label{q: q1}
 \item[(ii)] What are the infinitesimals for the infinitesimal generators of the action of a group on such a curve?
 \item[(iii)] How do these infinitesimals behave under the natural analogue of prolongation of a group action (i.e., do they satisfy a similar differential relation to the general prolongation formula as given in \cite[p.~110]{olver2000applications})? %\label{q: big_question}
\end{enumerate}

We will answer all three of these questions in this paper, with the main theorem being that the answer to
question (iii) is a resounding ``Yes!''. The answer to the first question requires the construction of an
operator, denoted $\ddx$, that mimics the behavior of $\frac{{\rm d}}{{\rm d}x}$. More precisely, in terms of the
multispace coordinates $\bigl( x, u, \prol{u}{1}{1},\dots, \prol{u}{n}{n} \bigr)$ on $M^{(n)}$ (a
multispace of~$M$), we would like to have $\ddx\bigl[ \prol{u}{k}{k} \bigr] = \prol{u}{k + 1}{k + 1}$ just
as $\frac{{\rm d}}{{\rm d}x}\bigl[ u^{(k)} \bigr] = u^{(k + 1)}$ on~$J^{(n)}$ (the jet space of~$M$).
Then, after determining how group actions affect multispace coordinates, it will be possible to
show that the infinitesimals associated with an infinitesimal generator of a group action on a curve's
equivalence class in multispace obey a recursive relation. Moreover, this relation, detailed in Theorem~\ref{thm: miraculous_formula}, can be shown to closely mirror the recursive prolongation formula used to determine
the infinitesimals for a group action on the jet space of curves in a manifold.

\section{Background}

There are two things that require introduction before the main body of the paper: Olver's
multispace for curves, and the infinitesimals for the action of a Lie group on a manifold. The manifolds
that we will be most interested in for this paper are analogous to those that arise as the solutions to
differential equations. In other words, the results of this paper can be contextualized as examining the way
that a group action on a generalization of jet space -- multispace -- affects the graph of the solution of a
differential equation as it appears within that multispace.

\subsection{Jet space}

Since Olver's multispace of curves is designed to be an expansion on the idea of jet space, it seems prudent
to begin with a quick reminder of how jet space works. For the uninitiated, jet space is a tool which helps
us answer the following question: Given two curves in a manifold, how do we determine if these two curves
contain ``essentially the same'' differential information up to some order? The notion of equivalence that
is sought here is known as \textit{$k$-th order contact equivalence} and is captured in the following definitions:

\begin{Definition}
 Let $t$ be a coordinate on $\mathbb{R}$ and let $k\geq 0$. Two differentiable maps $f,g\colon \mathbb{R}\rightarrow \mathbb{R}$ with $f(0) = g(0) = 0$ are said to have ${k}$\textit{-th order contact} at $0$ if
 \[f^{(1)}(0) = g^{(1)}(0),\quad f^{(2)}(0) = g^{(2)}(0),\quad\dots,\quad f^{(k)}(0) = g^{(k)}(0),\]
 where $f^{(i)}(t)$ is the $i$-th derivative of $f$ with respect to $t$. Equivalently, the two maps
 $f$ and $g$ may be said to have the same \textit{$k$-jet at $0$}.
\end{Definition}

This notion of jet space can then be naturally extended to manifolds:

\begin{Definition}\label{def: jet_on_manifolds}
 Let $M$, $N$ be differentiable manifolds and $f,g\colon M\rightarrow N$ be two maps. Then $f$ and $g$ are said to have
 ${k}$\textit{-th order contact} at $p\in M$ if
 \begin{enumerate}\itemsep=0pt
 \item[(i)] $f(p) = g(p) = q$, and
 \item[(ii)] for all maps $u\colon \mathbb{R}\rightarrow M$ and $v\colon N\rightarrow \mathbb{R}$ with $u(0) = p$ and $v(q) = 0$, the differentiable maps $v\circ f \circ u$ and $v \circ g \circ u$ have the same $k$-jet at~$0$.
 \end{enumerate}
\end{Definition}

As a consequence of Definition~\ref{def: jet_on_manifolds}, we can then say that two differentiable maps $f,g\colon M\rightarrow N$
have the same $k$-jet at $p$ if and only if all the derivatives up to order $k$ with respect to the coordinates
in any pair of local coordinate systems around $p$ and $q$ agree. In more familiar terms, this
means that the Taylor series of~$f$ and~$g$ agree up to order~$k$ near the point~$p$.

\subsection{Polynomial interpolation}

A full discussion of Olver's multispace of curves also requires a brief review of the basics of polynomial
interpolation. It is well known that given a real-valued function of one variable $f$ and a
set of $n+1$ distinct points on the graph of $f$, $\{ (x_i,f_i)\}_{i = 0}^n$ (where $f_i =
f(x_i)$), there exists a unique degree $n$ polynomial
\[p(x) = a_0 + a_1x + \cdots + a_nx^n\]
such that $p(x_i) = f_i$ for each $i=0,\dots,n$. Such a polynomial is called an
\textit{interpolating polynomial of $f$ at the points $\{ (x_i,f_i)\}_{i = 0}^n$}, and
the basic idea behind the multispace of curves is that such a polynomial can then be used as a
sort of $n$-th order approximation for the function whose graph passes through the interpolated points.
This coincides with the familiar notion that the truncated $n$-th order Taylor series may be viewed as
an $n$-th order approximation to the function with graph passing through a single point.

To determine this interpolationg polynomial, we need only to solve the system of equations
\begin{gather*}
 a_0 + a_1x_0 + a_2x_0^2+\cdots + a_{n-1}x_0^{n-1} + a_nx_0^n = f_0,\\
 a_0 + a_1x_1 + a_2x_1^2+\cdots + a_{n-1}x_1^{n-1} + a_nx_1^n = f_1,\\
 \cdots \cdots \cdots \cdots \cdots \cdots \cdots \cdots \cdots \cdots \cdots \cdots \cdots \cdots \cdots \cdots\\
 a_0 + a_1x_n + a_2x_n^2+\cdots + a_{n-1}x_n^{n-1} + a_nx_n^n = f_n
\end{gather*}
for the coefficients $a_0,\dots, a_n$. This system may be rewritten as a matrix equation of the form $V\bm{a} = \bm{f}$ to obtain
\[\underbrace{\begin{pmatrix}
 1 & x_{0} & x^{2}_{0} & \cdots & x^{n}_{0} \\
 1 & x_{1} & x^{2}_{1} & \cdots & x^{n}_{1} \\
\vdots & \vdots & \vdots & \ddots & \vdots \\
 1 & x_{n} & x^{2}_{n} & \cdots & x^{n}_{n}
 \end{pmatrix}}_V\underbrace{\begin{pmatrix}
 a_0\\a_1\\\vdots\\a_n
 \end{pmatrix}}_{\bm{a}} = \underbrace{\begin{pmatrix}
 f_0\\f_1\\\vdots\\f_n
 \end{pmatrix}}_{\bm{f}}.\]
The matrix $V$ is called the \textit{Vandermonde matrix} or the \textit{sample matrix} for the interpolation problem. Whenever the points $x_i$ are distinct, $V$ is known to be invertible, so solving for the coefficients of the interpolating polynomial is as simple as multiplying by the inverse of $V$.

\subsection{The Olver multispace}

It is now possible to give a recap of Olver's description for the multispace of curves as it appears in
\cite{olver2001geometric}. Once again, the idea behind multispace is to extend the notion of the jet space in
a~natural way. In particular, we still wish study the equivalence classes of curves in a manifold up to some imposed
contact conditions at prescribed points. However, unlike in the jet space wherein we might consider the
$n$-th order contact condition for a function $u$ at a given point $x_0$, in the multispace it will instead
be possible to consider a set of contact conditions imposed at a collection of points $\{x_0,\dots,x_k\}$ up
to, what we will call, $n$-th order multicontact. What is meant by ``$n$-th order multicontact'' will need
to be expounded upon, but we will want for it to be consistent with the usual notion of contact equivalence.
Put succinctly, the notion of multicontact equivalence in multispace should to the usual notion of contact
equivalence on the jet space when $n$-th order multicontact on the singleton set $\{x_0\}$ is considered.

\begin{Definition}
 An \textit{$(n +1)$-pointed manifold} is an object $\bm{M} = (z_0,\dots, z_n; M)$ consisting of a~smooth manifold $M$ and $n + 1$ not necessarily distinct points $z_0,\dots, z_n\in M$. Given $\bm{M}$, we~let%
 \[n_i = \#\{j\colon u_j = u_i\}\]
 be the number of points that coincide with $u_i$.
\end{Definition}

\begin{Definition}
 Two $(n + 1)$-pointed curves $\bm{C}$ and $\widetilde{\bm{C}}$ in a manifold $M$ are said to have
 \textit{$n$-th order multi-contact} if and only if there exists a permutation $\sigma\colon \{0,\dots, n \}
 \rightarrow \{0,\dots, n \}$ such that%
 \[z_i = \widetilde{z}_{\sigma(i)}\qquad\text{and}\qquad \ev{\bm{j_{n_i - 1}{C}}}{z_i} = \ev{\bm{j_{n_i - 1}\widetilde{{C}}}}{z_{\sigma(i)}}\qquad\text{for each}\quad i = 0,\dots,n.\]
 where $\bm{j_k{C}}$ is the $k$-th jet of the curve $\bm{C}$.\footnote{This definition varies slightly
 from the one given in~\cite{olver2001geometric} and instead reflects the one given in \cite{mari2018discrete}.
 The main distinction is that the definition found in \cite{mari2018discrete} accounts for relabeling of the points.}
\end{Definition}

\begin{Definition}
 The \textit{$n$-th order multispace on curves in a manifold $M$}, denoted $M^{(n)}$, is the set of equivalence classes of $(n + 1)$-pointed curves in $M$ under the equivalence relation of $n$-th order multicontact. The equivalence class of an $(n + 1)$-pointed curve $\bm{C}$ is called an \textit{$n$-th order multijet}, and denoted $\bm{j_nC} \in M^{(n)}$.
\end{Definition}

Under this definition, when all the points on $\bm{C}$ are distinct, then $\bm{j_nC} =
\bm{j_n\widetilde{C}}$ if and only if~$\bm{C}$ and $\bm{\widetilde{C}}$ are coincident at all $n + 1$ points. On the other hand, if
$z_0 = \cdots = z_n$, then the curves will be equivalent if and only if they satisfy the same
$n$-th order contact condition at the point~$z_0$.

\begin{Definition}
 Let $z_0$ and $z_1$ be points on the two-pointed curve $\bm{C}$. Then the \textit{limit of $\bm{C}$ as~$z_1$ coalesces to $z_0$} (a.k.a.\ the \textit{coalescent limit} of $\bm{C}$) is defined by the limit of the
 sequence $(\widetilde{z}_i)_{i = 0}^\infty$ where $\widetilde{z}_0 = z_1$, $\widetilde{z}_i \rightarrow z_0$, and every
 $\widetilde{z}_i$ lies on~$\bm{C}$. Likewise, if the points $(z_i)_{i = 0}^n$
 are points on an~${n+1}$ pointed curve, then the coalescent limit as all of the points coalesce to the
 basepoint~$z_0$ is determined by taking the coalescent limit of each pair $z_0$ and $z_i$ for $1\leq i \leq n$.\footnote{It is known that the coalescent limit defined this way is independent of the order in which
 the points $z_i$ are chosen to coalesce to $z_0$ (cf.\ \cite{deBoordivided2005,olver2006multivariate}).}
\end{Definition}

\begin{Definition}
 Let $\bigl(x,u^1(x),\dots,u^{m-1}(x)\bigr)$ be a local coordinate expression for a curve $C$ on an $m$-manifold $M$. An \textit{$(n+1)$-pointed graph} consists of the graph of the smooth function $u(x)$ together with $(n + 1)$ not necessarily distinct points $z_i = (x_i, u_i)$ on the graph.
\end{Definition}

It is now possible to define the classical divided differences $[z_0,z_1,\dots,z_n]$ by the recursive rule
\[[z_0,z_1,\dots,z_{k - 1},z_k] = \frac{[z_0,z_1,\dots,z_{k-2},z_k] - [z_0,z_1,\dots,z_{k-2},z_{k-1}]}{x_k - x_{k-1}}, \qquad [z_j] = u_j.\]

In the case where $u$ is $\mathbb{R}^{m -1}$-valued, the divided differences can
be defined component-wise. When all of the points $x_i$ are distinct, these divided
differences are well defined, and may be denoted by $u[x_{i_{0}},\dots,x_{i_{k}}]$. If, however, the
$x_i$ are not distinct, we have the following.

\begin{Definition}
 Given an $(n + 1)$-pointed curve $\bm{C} = (z_0,\dots,z_n; C)$, its divided differences are defined by $[z_j]_C = u_j$, and
 \[[z_0,z_1,\dots,z_k]_C = \lim_{z\rightarrow z_k}\frac{[z_0,z_1,\dots,z_{k-2},z]_C - [z_0,z_1,\dots,z_{k - 1}]_C}{x - x_{k - 1}}.\]
 When taking the limit, the point $z = (x,u(x))$ \textit{must lie on the curve $\bm{C}$}, and take limiting values as $x\rightarrow x_k$ and $u(x) \rightarrow u_k$.
\end{Definition}

This allows for the determination of the coalescent limit along a curve as several points $(z_{i_1}, \dots,
z_{i_r})$ coalesce to a single basepoint $z_0$ by recursively applying the above definition. Such a limit will
be denoted using the notation
\[\clim_{(z_{i_1}, \dots, z_{i_r})\rightarrow z_0} \bm{C}.\]

\begin{Proposition}[\cite{olver2001geometric}]
 If $z_i = (x_i, u_i)$ where all of the $(x_{i})$ are distinct, then the unique interpolating polynomial at the points $z_0,\dots,z_n$ of degree $\leq n$ is given by
 \begin{gather*}
 p(x) = u[x_0] + u[x_0,x_1](x - x_0) + \cdots + u[x_0,\dots,x_n](x - x_0)\cdots(x - x_{n - 1}),\\
 \intertext{or, equivalently,}
 p(x) = [z_0] + [z_0,z_1](x - x_0) + \cdots + [z_0,\dots,z_n](x - x_0)\cdots(x - x_{n - 1}).
 \end{gather*}
 We will call this unique polynomial the $n$-th order Newton approximation of the function $u(x)$ based at the points $(x_i)$.
\end{Proposition}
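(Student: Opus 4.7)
The statement has two parts: uniqueness of the degree-$\leq n$ interpolant and the identification of its coefficients with divided differences. Uniqueness is already noted in the excerpt via invertibility of the Vandermonde matrix, so the real task is to verify that the Newton form produces this unique polynomial. My plan is to proceed by induction on $n$, with the crucial intermediate step being the identification of the leading coefficient of the interpolant with the top-order divided difference.

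First I would prove the auxiliary claim that if $P_{i_0, \ldots, i_k}(x)$ denotes the unique polynomial of degree $\leq k$ interpolating the data $(x_{i_j}, u_{i_j})$, then the coefficient of $x^k$ in $P_{0, \ldots, k}(x)$ equals $[z_0, \ldots, z_k]$. Granting this, the Newton formula follows by a telescoping argument: the difference $P_{0,\ldots,n}(x) - P_{0,\ldots,n-1}(x)$ is a polynomial of degree $\leq n$ that vanishes at $x_0, \ldots, x_{n-1}$, so it must equal $[z_0,\ldots,z_n]\,(x - x_0)\cdots(x - x_{n-1})$, the coefficient being forced by the leading-coefficient claim (since $P_{0,\ldots,n-1}$ has degree $\leq n-1$ and so contributes nothing to the $x^n$ coefficient). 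Iterating this identity from $n$ down to $0$ assembles the Newton form exactly as stated.

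The main obstacle, and where the real work lies, is the auxiliary claim, which I would prove by induction on $k$. The base case is immediate since $P_0(x) = u_0 = [z_0]$. For the inductive step, the key trick is to exhibit $P_{0,\ldots,k}$ as the Aitken-style combination
\[
p(x) = \frac{(x - x_{k-1})\,P_{0,\ldots,k-2,k}(x) - (x - x_k)\,P_{0,\ldots,k-2,k-1}(x)}{x_k - x_{k-1}},
\]
which has degree $\leq k$, and then verify that it interpolates the data at all $k+1$ nodes: for $i \leq k-2$ both lower-order interpolants already take the value $u_i$ at $x_i$, while for $i = k-1$ and $i = k$ one of the linear factors vanishes and a direct calculation recovers $u_{k-1}$ and $u_k$ respectively. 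By uniqueness $p = P_{0,\ldots,k}$. Reading off the coefficient of $x^k$ in $p$ and applying the inductive hypothesis to the two degree-$(k-1)$ interpolants produces exactly
\[
[z_0, \ldots, z_k] = \frac{[z_0, \ldots, z_{k-2}, z_k] - [z_0, \ldots, z_{k-2}, z_{k-1}]}{x_k - x_{k-1}},
\]
which is the very recursion defining the divided differences in the excerpt. This closes the induction and completes the plan.
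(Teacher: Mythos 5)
Your argument is correct, and it cannot be compared line by line with anything in the paper because the paper offers no proof of this Proposition: it is quoted from Olver's work \cite{olver2001geometric} as a classical fact about Newton interpolation. What you give is the standard argument, and it is well adapted to the particular recursion used here: your Aitken-style combination uses exactly the two node lists $(z_0,\dots,z_{k-2},z_k)$ and $(z_0,\dots,z_{k-2},z_{k-1})$ and the denominator $x_k-x_{k-1}$ appearing in the paper's definition of $[z_0,\dots,z_k]$, so the leading-coefficient identification reproduces that recursion verbatim rather than the more common form $\bigl([z_1,\dots,z_k]-[z_0,\dots,z_{k-1}]\bigr)/(x_k-x_0)$. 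Two small points to make explicit in a write-up: state the auxiliary leading-coefficient claim for an arbitrary list of distinct nodes (you invoke it for $P_{0,\dots,k-2,k}$, not only for consecutive lists), and note that $u[x_0,\dots,x_k]=[z_0,\dots,z_k]$ is simply the paper's notational convention when the $x_i$ are distinct, so the two displayed forms of $p(x)$ are one and the same statement. An alternative route, closer in spirit to the determinantal formulas the paper actually works with (cf.\ Proposition~\ref{prop: lagrange_approx}), would be to solve the Vandermonde system by Cramer's rule, identify the leading coefficient of the degree-$\leq k$ interpolant with $\dd{k}\bigl(u,x^k\bigr)/\dd{k}$, and verify that this ratio of determinants satisfies the divided-difference recursion; your approach buys the same conclusion with no determinant manipulations and is arguably the more elementary of the two.
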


\begin{Theorem}[\cite{olver2001geometric}]
 Two $(n + 1)$-pointed curves $\bm{C}$, $\widetilde{\bm{C}}$ have $n$-th order multicontact if and only if they have the same divided differences:
 \[[z_0,z_1,\dots,z_k]_C = [z_0,z_1,\dots,z_k]_{\widetilde{C}},\qquad k = 0,\dots,n.\]
\end{Theorem}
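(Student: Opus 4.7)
The plan is to show that both sides of the ``if and only if'' are equivalent to a third intermediate condition: that $\bm{C}$ and $\widetilde{\bm{C}}$ determine the same (generalized) Newton interpolating polynomial through their base points. The technical heart of the argument is the claim that every divided difference $[z_{j_0},\dots,z_{j_s}]_C$ depends on the curve $C$ only through the jet of $u$ at each distinct base point, and only up to an order equal to that point's multiplicity minus one.

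First, I would prove the model identity $[\underbrace{z_0,\dots,z_0}_{k+1}]_C = u^{(k)}(x_0)/k!$ by induction on $k$. The base case $k = 0$ is immediate from $[z_0]_C = u_0$, and the inductive step unwinds the recursive limit definition once, identifying the resulting difference quotient with the definition of a derivative (or, equivalently, applying l'H\^opital one step and invoking the inductive hypothesis). The analogous statement for mixed multiplicities follows from the same inductive scheme, combined with the fact (cited in the excerpt from \cite{deBoordivided2005,olver2006multivariate}) that iterated coalescent limits are independent of the order of coalescence.

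With that key lemma in hand, the forward direction is routine: if $\bm{C}$ and $\widetilde{\bm{C}}$ have $n$-th order multicontact, then up to a permutation their base points coincide and the jets of $u$ and $\widetilde{u}$ agree at each base point to order $n_i - 1$. Each divided difference is then a function purely of data shared by both curves, so $[z_0,\dots,z_k]_C = [z_0,\dots,z_k]_{\widetilde{C}}$ for $k = 0,\dots,n$. For the reverse direction, assume all divided differences agree. Then the preceding proposition (extended via coalescent limits when base points coincide) produces the same polynomial
\[
p(x) = [z_0] + [z_0,z_1](x - x_0) + \cdots + [z_0,\dots,z_n](x - x_0)\cdots (x - x_{n-1})
\]
for both curves. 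This $p$ is the Hermite interpolant at $\{z_0,\dots,z_n\}$, so by the key lemma applied in reverse, $p^{(j)}(x_i) = u^{(j)}(x_i) = \widetilde{u}^{(j)}(x_i)$ for $0 \le j \le n_i - 1$ at each distinct base point $z_i$, which is exactly the multicontact condition.

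The main obstacle is the mixed-multiplicity version of the key lemma: one must verify that a divided difference taken over a multiset containing several distinct base points, each with its own multiplicity, depends on the curve only through the prescribed finite jet data at each point. Making this precise requires careful bookkeeping of the nested coalescent limits and relies essentially on their order-independence. Everything else in the argument is either a direct computation with the Newton formula or an unwinding of definitions.
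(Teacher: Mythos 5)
The paper you are working from does not prove this statement at all: it is quoted verbatim from Olver's cited paper (\emph{Appl.\ Algebra Engrg.\ Comm.\ Comput.} \textbf{11} (2001), 417--436), so there is no internal proof to compare against, and your proposal has to be judged on its own terms. On those terms it is essentially the standard interpolation-theoretic argument, and it is sound in outline: the key lemma that a coalescent divided difference over a multiset depends on the curve only through the jet of $u$ at each distinct base point up to order (multiplicity $-\,1$) --- with the confluent case $[z_0,\dots,z_0]_C = u^{(k)}(x_0)/k!$ as the model --- is exactly what makes both directions work, and your appeal to order-independence of the coalescent limits is the right tool for the mixed-multiplicity bookkeeping. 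Two places deserve more care than your sketch gives them. First, in the reverse direction the step ``$p$ is the Hermite interpolant, so $p^{(j)}(x_i) = u^{(j)}(x_i)$ for $j \le n_i - 1$'' is not an immediate reversal of the key lemma: you need to know that the Newton-form polynomial built from the curve's coalescent divided differences actually satisfies the Hermite matching conditions, which is usually established either by induction on the Newton form with coincident nodes listed consecutively or via the remainder identity $u(x) - p(x) = [z_0,\dots,z_n,x]_C\,(x-x_0)\cdots(x-x_n)$ together with smoothness of the divided difference in $x$; also note that only the ``leading'' differences $[z_0,\dots,z_k]$ are assumed equal, so uniqueness of the Hermite interpolant must be invoked to conclude the full jet data agree. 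Second, the multicontact definition in this paper allows a relabeling permutation $\sigma$, so in the forward direction you should say explicitly that divided differences are symmetric functions of their arguments (for a fixed smooth curve), which lets you absorb $\sigma$ and reduce to the case $z_i = \widetilde z_i$. Neither point is a fatal gap, but both are exactly the ``careful bookkeeping'' your last paragraph defers, and a complete write-up would need them spelled out.
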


\subsection{Infinitesimals of group actions on jet space}

We now turn to the topic of infinitesimals of group actions on jet space. These infinitesimals provide
a natural way of studying subspaces of jet space under a group action by giving a way to (1)~determine what group actions leave the subspace invariant and (2)~transform the subspace
into a new (equivalent) subspace that is easier to work with. For more information on the particulars of
this process, the reader may refer to~\cite{olver2000applications}.

Consider the action of a Lie group $G$ on a manifold $M$. The orbits of the action of any
one-parameter subgroup $\gamma(\varepsilon)$ (with $\gamma(0) = e$) of $G$ on $M$ appear as the integral
curves of a~vector field $\bm{v}_\gamma$. This vector field is then called the \textit{infinitesimal generator} for the action of the
one-parameter subgroup. Indeed, if we take $\bm{y} = (y_1,\dots, y_m)$ to be coordinates on~$M$, then the
infinitesimal generator may be locally expressed as
\[\bm{v}_\gamma = \sum_{i = 1}^m \zeta_{i,\gamma}(\bm{y}) \frac{\partial}{\partial y_i},\]
where the symbols $\zeta_{i,\gamma}(\bm{y})$ will be called the \textit{infinitesimals} for the one-parameter
group action. The infinitesimal generators that are of interest to us in this work are
those arising from the action of a one-parameter subgroup of $G$ on a manifold $M^2$ with local coordinates $(x, u) $.\footnote{The definition for $M^n$ with coordinates
$(\bm{x},\bm{u})$ is analogous, but gets a bit notationally cumbersome.} In this case, for
the one-parameter subgroup, $\gamma(\varepsilon)$, the infinitesimal generator takes the form
\[\bm{v}_\gamma = \xi_{\gamma}(x,u)\frac{\partial}{\partial x} + \varphi_{\gamma}(x,u) \frac{\partial}{\partial u}.\]
When we consider the action on a curve in $J^{(0)}(\mathbb{R}, \mathbb{R})=M$ representing the solution
to the DE $F\bigl(x,u,u^{(1)},\dots, u^{(n)}\bigr) = 0$, it is possible to see
that the ODE describes an embedded submanifold of the larger jet space $J^{(n)}$ with
local coordinates $\bigl(x, u, u^{(1)},\dots, u^{(n)}\bigr)$. If we were to then consider the infinitesimal generator
of the induced one-parameter group action on $J^{(n)}(\mathbb{R},\mathbb{R})$, it would have an expression of the form
\begin{gather*}
\bm{v}_\gamma^{(n)} = \xi_{\gamma}(x,u)\frac{\partial}{\partial x} + \varphi_{\gamma}(x,u) \frac{\partial}{\partial u} + \varphi_{[1],\gamma}\bigl(x, u, u^{(1)}\bigr) \frac{\partial}{\partial u^{(1)}} + \cdots\\
\hphantom{\bm{v}_\gamma^{(n)} =}{} + \varphi_{[n],\gamma}\bigl(x, u, u^{(1)},\dots, u^{(n)}\bigr) \frac{\partial}{\partial u^{(n)}}.
\end{gather*}
Here the infinitesimals are obtained from the induced group action on $J^{{(n)}}(\mathbb{R},\mathbb{R})$ using the standard method for
change of coordinates. More specifically, if
\[
(\widetilde{x}(x,u;\varepsilon), \widetilde{u}(x,u;\varepsilon)) =
\gamma(\varepsilon)\cdot (x, u),
\]
 then
\begin{gather*}
 \xi_{\gamma}(x,u) = \ev{\frac{{\rm d}}{{\rm d}\varepsilon}}{\varepsilon = 0} \left[ \widetilde{x} \right],\qquad
 \varphi_{\gamma}(x,u) = \ev{\frac{{\rm d}}{{\rm d}\varepsilon}}{\varepsilon = 0} \left[ \widetilde{u} \right],\qquad
 \varphi_{[1], \gamma}(x,u) = \ev{\frac{{\rm d}}{{\rm d}\varepsilon}}{\varepsilon = 0} \left[\frac{{\rm d}\widetilde{u}}{{\rm d}\widetilde{x}} \right],\\
 \varphi_{[2], \gamma}(x,u) = \ev{\frac{{\rm d}}{{\rm d}\varepsilon}}{\varepsilon = 0} \left[\frac{{\rm d}^2\widetilde{u}}{{\rm d}\widetilde{x}^2} \right],\qquad\dots.
\end{gather*}

This new vector field, $\bm{v}_\gamma^{(n)}$, with infinitesimals as given above, is then known as the \textit{$n$-th prolongation} of the vector field $\bm{v}_\gamma$, and it serves as the infinitesimal generator of the \textit{prolonged group action on the jet space}.
We may also sometimes refer to the prolongation of~$M$ without referring to any particular group $G$. In this case, we are simply referring to the natural extension from $M$ with local coordinates $(x,u)$ to $J^{(n)}$ with local coordinates $\bigl( x, u, u^{(1)},\dots,u^{(n)} \bigr)$ obtained by taking derivatives.

\begin{Note}
For ease of notation, the subscript $\gamma$ will be dropped in the rest of this paper unless it is otherwise necessary.
\end{Note}

The reader should also note here that there is a difference between the $k$-th derivative of $\varphi$ with respect to $x$, which would be denoted by $\varphi^{(k)}$, and the infinitesimal for the prolongation of the $k$-th derivative, which is denoted by $\varphi_{[k]}$. There is, however, a recursive relationship between these two objects, as is given in the following proposition.

\begin{Proposition}[\cite{mansfield2010practical, olver2001geometric}]
 \begin{equation}\label{eq: 1d-recursion-smooth}
 \varphi_{[k]} = \frac{{\rm d}}{{\rm d}x}\left[ \varphi_{[k-1]} \right] - u^{(k)}\frac{{\rm d}}{{\rm d}x}\xi,
 \end{equation}
 where $u^{(k)}$ is the $k$-th derivative of $u$, and $\frac{{\rm d}}{{\rm d}x}$ is the total derivative. Equivalently,
 this relation may be stated as
 \begin{equation}\label{eq: 1d-rec-expansion}
 \varphi_{[k]} = \frac{{\rm d}^k}{({\rm d}x)^k}\varphi - \sum\limits_{i = 1}^{k}\binom{k}{i-1}\frac{{\rm d}^i u}{({\rm d}x)^i} \frac{{\rm d}^{k - i + 1}\xi}{({\rm d}x)^{k - i + 1}},
 \end{equation}
 or as
 \begin{equation}\label{eq: useful-derivative}
 \varphi_{[k]} = \frac{{\rm d}^k}{({\rm d}x)^k}\bigl[ \varphi - u^{(1)}\xi \bigr] + u^{(k + 1)}\xi,
 \end{equation}
 where $\frac{{\rm d}^k}{({\rm d}x)^k}$ is the $k$-th total derivative. This third equation is also known as the
 \textit{characteristic form} of the prolongation formula and $\varphi - u^{(1)}\xi$ is known as the
 \textit{characteristic} of $\bm{v}$.
\end{Proposition}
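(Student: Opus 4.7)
The plan is to prove equation~\eqref{eq: 1d-recursion-smooth} directly from the definition of the infinitesimal $\varphi_{[k]}$, and then deduce the other two forms algebraically from it.

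First, I would recall that $\frac{\rm d}{{\rm d}x}$ here denotes the total derivative along a graph, so that for the transformed curve $(\widetilde x, \widetilde u)$ the chain rule gives
\[
\frac{{\rm d}\widetilde u^{(k-1)}}{{\rm d}\widetilde x} = \frac{\frac{{\rm d}}{{\rm d}x}\widetilde u^{(k-1)}}{\frac{{\rm d}}{{\rm d}x}\widetilde x},
\]
and hence $\widetilde u^{(k)} = \bigl(\frac{\rm d}{{\rm d}x}\widetilde u^{(k-1)}\bigr)/\bigl(\frac{\rm d}{{\rm d}x}\widetilde x\bigr)$. Setting $\varepsilon = 0$ one has $\widetilde x = x$, $\widetilde u^{(k-1)} = u^{(k-1)}$, so $\frac{\rm d}{{\rm d}x}\widetilde x|_{\varepsilon = 0} = 1$ and $\frac{\rm d}{{\rm d}x}\widetilde u^{(k-1)}|_{\varepsilon = 0} = u^{(k)}$. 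Applying $\dep$ to the quotient above, interchanging $\dep$ with the total derivative in $x$ (they act on independent variables), and using the quotient rule evaluated at $\varepsilon = 0$ yields
\[
\varphi_{[k]} = \dep\!\left[\frac{\rm d}{{\rm d}x}\widetilde u^{(k-1)}\right] - u^{(k)}\dep\!\left[\frac{\rm d}{{\rm d}x}\widetilde x\right] = \frac{\rm d}{{\rm d}x}\varphi_{[k-1]} - u^{(k)}\frac{\rm d}{{\rm d}x}\xi,
\]
which is exactly~\eqref{eq: 1d-recursion-smooth}.

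Next, I would establish the characteristic form~\eqref{eq: useful-derivative} by induction on $k$. The base case $k = 1$ follows from $\varphi_{[1]} = \frac{\rm d}{{\rm d}x}\varphi - u^{(1)}\frac{\rm d}{{\rm d}x}\xi$ by writing $\frac{\rm d}{{\rm d}x}\bigl[\varphi - u^{(1)}\xi\bigr] + u^{(2)}\xi$ and collecting terms. For the inductive step, assuming~\eqref{eq: useful-derivative} at order $k - 1$, apply~\eqref{eq: 1d-recursion-smooth}:
\[
\varphi_{[k]} = \frac{\rm d}{{\rm d}x}\!\left[\frac{{\rm d}^{k-1}}{({\rm d}x)^{k-1}}\bigl[\varphi - u^{(1)}\xi\bigr] + u^{(k)}\xi\right] - u^{(k)}\frac{\rm d}{{\rm d}x}\xi,
\]
and the Leibniz term $\frac{\rm d}{{\rm d}x}\bigl[u^{(k)}\xi\bigr] = u^{(k+1)}\xi + u^{(k)}\frac{\rm d}{{\rm d}x}\xi$ cancels the subtracted $u^{(k)}\frac{\rm d}{{\rm d}x}\xi$ and leaves $u^{(k+1)}\xi$, giving~\eqref{eq: useful-derivative} at order $k$.

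Finally, to get the expanded form~\eqref{eq: 1d-rec-expansion}, I would start from~\eqref{eq: useful-derivative} and apply the Leibniz rule to expand $\frac{{\rm d}^k}{({\rm d}x)^k}\bigl[u^{(1)}\xi\bigr]$ as
\[
\sum_{j = 0}^{k}\binom{k}{j} u^{(j + 1)}\,\frac{{\rm d}^{k-j}\xi}{({\rm d}x)^{k-j}} = \sum_{i = 1}^{k + 1}\binom{k}{i - 1}\frac{{\rm d}^i u}{({\rm d}x)^i}\frac{{\rm d}^{k - i + 1}\xi}{({\rm d}x)^{k - i + 1}},
\]
after reindexing $i = j + 1$. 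The $i = k + 1$ summand is precisely $u^{(k+1)}\xi$, which cancels the trailing $+\,u^{(k+1)}\xi$ in~\eqref{eq: useful-derivative}, leaving exactly~\eqref{eq: 1d-rec-expansion}.

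The only nontrivial step is the very first: carefully justifying that $\dep$ commutes with the total $x$-derivative and applying the quotient rule to the chain-rule expression for $\widetilde u^{(k)}$; all subsequent manipulations are induction and Leibniz bookkeeping. I would lean on the fact that $\varepsilon$ and $x$ are independent parameters (the group action is smooth in $\varepsilon$ and the curve is smooth in $x$), so the mixed partials commute and no further subtlety arises.
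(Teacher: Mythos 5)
Your proof is correct: the chain-rule/quotient-rule computation of $\dep\bigl[{\rm d}\widetilde u^{(k-1)}/{\rm d}\widetilde x\bigr]$ (using $D_x\widetilde x|_{\varepsilon=0}=1$, $D_x\widetilde u^{(k-1)}|_{\varepsilon=0}=u^{(k)}$, and the commutation of $\dep$ with the total derivative) gives \eqref{eq: 1d-recursion-smooth}, and the induction for the characteristic form \eqref{eq: useful-derivative} together with the Leibniz reindexing for \eqref{eq: 1d-rec-expansion} both check out. The paper states this Proposition without proof, citing \cite{mansfield2010practical, olver2001geometric}; your argument is essentially the standard derivation given in those references, so there is no divergence of approach to report.
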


The (arguably) most important application of the characteristic formula can be
found in the proof of the Noether theorems for variational symmetries and conservation laws. With this in
mind, it will be the goal of this paper to reconstruct the above formulae, or something similar to them, in the case
of multispace.

\section{Differential structure}

As opposed to the jet space prolongation where we were able to jump directly into the computation of
the infinitesimals for a group action, some issues arise when we attempt to define the appropriate
analogue of prolongation for $n$-pointed curves in multispace. Differentiating, for example,
can be a bit tricky. As such, it is first necessary to build up the differential structure for
general divided differences before considering what it means to prolong a~group action on a~curve in
multispace.

Whereas in the traditional jet space the lift of the $n$-jet at a given point along a~curve to an $(n + 1)$-jet is uniquely determined, the notion of ``multi-contact'' that allows for the
construction of polynomial approximations of curves makes it so that the lift is no longer unique.
Indeed, we can extend the $n$-th order contact condition at a point $(x_0, u_0)$ along the curve to
either an $(n + 1)$-st order contact condition at that point, or to a $n$-th order contact condition at that
point and a transversality condition at some other point $(x_1, u_1)$. Thus, a methodology which tracks
this extra information for the analogue of prolongation in the multispace is required.

\subsection{Notation and divided differences}

We begin by giving a presentation of the classical divided difference formulas that will be more useful
throughout the rest of the paper. For simplicity, we will restrict to the case of scalar-valued $u$.\footnote{The
formulas presented here can be easily extended to vector-valued $u$ using component-wise application of the presented
formulas} Let $U$ be an open subset of $\mathbb{R}$; then for $x_0,x_1 \in U$, we will let $\dd{1}$ denote the
determinant of the Vandermonde matrix given by
\[\dd{1} = \left| \begin{matrix*}
 1 & x_0\\ 1 & x_1
\end{matrix*} \right|,\]
and for $x_0,x_1,x_2 \in U$, we will let
\[\dd{2} = \left| \begin{matrix*}
 1 & x_0 & x_0^2\\ 1 & x_1 & x_1^2\\ 1 & x_2 & x_2^2
\end{matrix*} \right|,\]
and so on.\footnote{The idea with this notation is that the superscript will eventually denote the order of the derivative approximation.}

\begin{Definition}
 We say that the set $\Gamma = \{ x_i \in U\colon i = 0,\dots,n \}$ is a \textit{multispace lattice of order $n$ and basepoint $x_0$} if $\dd{n} \neq 0$, i.e., if the $x_i$ are all distinct.
\end{Definition}

To prove Proposition~\ref{prop: discrete_action_computation}, it is also necessary to introduce the notation $\dd{k}\bigl(u,
x^\ell\bigr)$ to represent the determinant wherein the column corresponding to $x^\ell$ is replaced with the
values of $u$ evaluated at each of the $x_i$. So, for example, $\dd{4}\bigl(u,x^2\bigr)$ will denote the matrix
\[\dd{4}\bigl(u,x^2\bigr) = \left| \begin{matrix*}
 1 & x_0 & u_0 & x_0^3 & x_0^4\\
 1 & x_1 & u_1 & x_1^3 & x_1^4\\
 1 & x_2 & u_2 & x_2^3 & x_2^4\\
 1 & x_3 & u_3 & x_3^3 & x_3^4\\
 1 & x_4 & u_4 & x_4^3 & x_4^4
\end{matrix*} \right|.\]

\begin{Proposition}[\cite{kellison1975numerical}]\label{prop: lagrange_approx}
 Given $u\colon U\rightarrow \mathbb{R}$ and $n + 1$ distinct points $(x_{0},\dots,x_{n})\in U$, the $n$-th order Newton approximation for the points $\{(x_i, u_i)\}$ is given by
 \begin{gather*}
 p(x) = u_0 + \frac{\dd{1}(u, x)}{\dd{1}}(x - x_0) + \frac{\dd{2}\bigl(u, x^2\bigr)}{\dd{2}}(x - x_0)(x - x_1) + \cdots\\
 \hphantom{p(x) =}{}
 + \frac{\dd{n}(u,x^n)}{\dd{n}}(x-x_0)\cdots(x-x_{n - 1}).
\end{gather*}
\end{Proposition}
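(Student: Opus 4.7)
The plan is to reduce the claimed formula to the Newton form recalled just before the statement, by establishing for each $k \in \{0,\dots,n\}$ the identity
\[
 u[x_0,\dots,x_k] = \frac{\dd{k}(u,x^k)}{\dd{k}}.
\]
Once this is available, substituting it termwise into the Newton form reproduces the proposition verbatim, so the whole task reduces to verifying this one identity.

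First I would fix $k$ and consider the interpolation problem restricted to the first $k+1$ points. Applied to just $(x_0,u_0),\dots,(x_k,u_k)$, the Newton form yields the unique polynomial $p_k$ of degree at most $k$ with $p_k(x_i)=u_i$, and inspection shows that only its final Newton summand $u[x_0,\dots,x_k](x-x_0)\cdots(x-x_{k-1})$ contributes to the $x^k$ term. Hence the leading coefficient of $p_k$ is exactly $u[x_0,\dots,x_k]$.

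On the other hand, writing $p_k(x)=a_0+a_1x+\cdots+a_kx^k$ and imposing $p_k(x_i)=u_i$ gives the $(k+1)\times(k+1)$ Vandermonde system $V\bm{a}=\bm{f}$ introduced earlier in the paper. Since the $x_i$ are distinct, $\dd{k}\ne 0$ and Cramer's rule applies. Because $\dd{k}(u,x^k)$ is by definition the determinant obtained from $V$ by replacing the $x^k$-column (the last column) with $\bm{f}$, Cramer's rule returns $a_k = \dd{k}(u,x^k)/\dd{k}$. Equating the two expressions for the leading coefficient of $p_k$ yields the desired identity. The one thing to keep straight is that the column being replaced in $\dd{k}(u,x^k)$ is the final one — replacing any earlier column would instead produce a lower-order coefficient rather than the leading one needed to match the divided difference; beyond this bookkeeping no substantive obstacle arises.
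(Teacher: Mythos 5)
Your argument is correct. Note that the paper itself supplies no proof of this proposition --- it is quoted from the cited numerical-analysis reference --- so there is no internal argument to compare against; your reduction is the standard derivation and it only uses facts already recorded in the paper, namely the uniqueness of the degree-$\le k$ interpolant through $k+1$ distinct points and the Newton form $p_k(x)=u[x_0]+u[x_0,x_1](x-x_0)+\cdots+u[x_0,\dots,x_k](x-x_0)\cdots(x-x_{k-1})$. Your two key observations are both sound: only the final Newton summand contributes to the $x^k$ coefficient, so the leading coefficient of $p_k$ is $u[x_0,\dots,x_k]$; and Cramer's rule applied to the Vandermonde system $V\bm{a}=\bm{f}$ (invertible since the $x_i$ are distinct) gives $a_k=\dd{k}\bigl(u,x^k\bigr)/\dd{k}$, because $\dd{k}\bigl(u,x^k\bigr)$ is by the paper's convention exactly the determinant with the last column replaced by the data vector. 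Equating the two and substituting termwise into the Newton form yields the stated formula, so the proposal is complete.
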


\begin{Corollary}[\cite{olver2001geometric}]
 \[\clim_{(x_0,\dots,x_k)\rightarrow x_0} \frac{\dd{k}\bigl(u,x^k\bigr)}{\dd{k}} = \frac{u^{(k)}(x_0)}{k!}.\]
\end{Corollary}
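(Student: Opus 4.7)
The plan is to identify $\dd{k}\bigl(u,x^k\bigr)/\dd{k}$ with the classical divided difference $[z_0,z_1,\dots,z_k]_C$ and then reduce to the well-known limit property of divided differences. Comparing the statement of Proposition~\ref{prop: lagrange_approx} with the Newton form
\[p(x) = [z_0] + [z_0,z_1](x - x_0) + \cdots + [z_0,\dots,z_k](x - x_0)\cdots(x - x_{k - 1})\]
cited from \cite{olver2001geometric}, the coefficient of $(x-x_0)\cdots(x-x_{k-1})$ must be the same in both expansions by uniqueness of the interpolating polynomial at $k+1$ distinct nodes. Hence $\dd{k}\bigl(u,x^k\bigr)/\dd{k} = [z_0,\dots,z_k]_C$, and the corollary becomes the assertion
\[\clim_{(x_1,\dots,x_k)\to x_0}[z_0,z_1,\dots,z_k]_C = \frac{u^{(k)}(x_0)}{k!}.\]

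I would prove this latter limit by induction on $k$. The base case $k=1$ is immediate, since $[z_0,z_1]_C = (u_1-u_0)/(x_1-x_0)\to u^{(1)}(x_0)$ as $x_1\to x_0$ along $C$, which agrees with $u^{(1)}(x_0)/1!$. For the inductive step, I would use the recursive definition of the divided difference along $\bm{C}$ coming just before the statement,
\[[z_0,z_1,\dots,z_k]_C = \lim_{z\rightarrow z_k}\frac{[z_0,z_1,\dots,z_{k-2},z]_C - [z_0,z_1,\dots,z_{k - 1}]_C}{x - x_{k - 1}},\]
and successively apply the inductive hypothesis to each coalescing pair. A cleaner variant is to invoke the mean value theorem for divided differences: if $u\in C^k$ on a neighborhood of $x_0$, then there is some $\xi$ lying in the smallest closed interval containing $x_0,\dots,x_k$ with $[z_0,\dots,z_k]_C = u^{(k)}(\xi)/k!$; as all $x_i\to x_0$, continuity of $u^{(k)}$ forces $u^{(k)}(\xi)\to u^{(k)}(x_0)$, giving the conclusion.

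The only genuine subtlety is that the coalescent limit on the left-hand side of the corollary allows the points $x_1,\dots,x_k$ to collapse onto $x_0$ in any order (or simultaneously), whereas the recursive definition processes them one at a time. This is precisely the content of the footnote following the definition of the coalescent limit (citing \cite{deBoordivided2005, olver2006multivariate}), so the order-independence may be taken as given and the induction-based argument suffices. I expect this step to be the main (though mild) technical point, since everything else is either a direct determinantal identification or the classical mean value property of divided differences.
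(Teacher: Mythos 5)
Your proposal is correct and matches the route the paper implicitly takes: the corollary is simply cited from \cite{olver2001geometric}, with the surrounding text justifying it by comparing the Newton form of Proposition~\ref{prop: lagrange_approx} with the Taylor series, and your identification of $\dd{k}\bigl(u,x^k\bigr)/\dd{k}$ with the leading divided difference via uniqueness of the interpolating polynomial, followed by the mean value property of divided differences, is exactly the standard argument underlying that citation. As a minor bonus, the mean-value variant controls the limit regardless of how the points coalesce, so the appeal to order-independence of the coalescent limit is not even needed for this step.
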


As is shown in \cite{olver2001geometric}, the coalescent limit for divided difference formulas is well defined and independent of the order in which pairs of points are allowed to coalesce. And so we will introduce the general notation
\[\prol{u}{k}{k} := k!\frac{\dd{k}\bigl(u,x^k\bigr)}{\dd{k}},\]
where the upper parenthetical index is used to keep track of the number of points used to construct the
derivative approximation (in this case, $k + 1$ points). In terms of this notation, a~multispace~$M^{(n)}$
can then be given local coordinates $\bigl(x_0,\dots,x_n,\prol{u}{0}{0},\prol{u}{1}{1},\dots,
\prol{u}{n}{n} \bigr)$ analogous to the coordinates on $J^{(n)}$ given by $\bigl( x, u, u^{(1)}, \dots,
u^{(n)} \bigr)$. This notation will also sometimes be extended to keep track of the basepoint of the lattice.
So the notation $\prol{u}{k}{k}(x_i)$ will refer to the divided difference formula on the lattice $\Gamma_i =
 \{ x_i,\dots, x_{i + k} \}$.

This notation also provides a familiar way of expressing the formula in Proposition~\ref{prop: lagrange_approx}:
\[p(x) = \frac{\prol{u}{0}{0}}{0!} + \frac{\prol{u}{1}{1}}{1!}(x - x_0) + \frac{\prol{u}{2}{2}}{2!}(x - x_0)(x - x_1) + \cdots + \frac{\prol{u}{n}{n}}{n!}(x-x_0)\cdots(x - x_{n - 1}).\]
When we compare this to the Taylor series for $u$, we arrive at the convenient formula
\[\clim_{(x_0,\dots,x_k)\rightarrow x_0}\prol{u}{k}{k} = u^{(k)}(x_0).\]

We can now state more precisely what we mean by the \textit{prolongation of a curve} in $M$.

\begin{Definition}
 Given a 1-pointed curve in $M$ consisting of the graph of a smooth function $u(x)$ of~1 variable
 together with a point $(x_0, u_0)$ on the graph, the \textit{$n$-th order multispace prolongation of $u$
 at the $($not necessarily distinct$)$ points $(x_0,\dots, x_n)$}, is given by evaluating the (possibly
 coalesced) divided differences $\prol{u}{k}{}$ of $u$ for $1 \leq k \leq n$. The points used for the
 divided differences are assumed to be taken using the ordering imposed by $(x_0, \dots, x_n)$. When the
 points $(x_0,\dots, x_n)$ are understood, we will commonly refer to this as the \textit{$n$-th prolongation
 of~$u$.}

 Likewise, when we wish to extend an $n$-th order prolongation of $u$ at the points $(x_0,\dots,x_n)$ to
 an $(n + 1)$-st order prolongation at the points $(x_0,\dots, x_n, x_{n + 1})$, this may be done by adding a
 point and computing the appropriate divided differences. We will commonly refer to this process of extending
 the order by 1 as \textit{prolonging} the $n$-pointed curve.
\end{Definition}

This definition preserves the usual projection maps from jet space. If we consider the
$n$-th prolongation of $u$, $M^{{(n)}}_{(x_0,\dots,x_n)}$, then
$\pi\colon M^{{(n)}}_{(x_0,\dots,x_n)}\rightarrow M^{{(n-1)}}_{(x_0,\dots,x_{n-1})}$ is well defined and reduces
to the usual projection map on jet spaces $\pi\colon J^{(n)}_{x_0}\rightarrow J^{(n - 1)}_{x_0}$ when $x_0 = x_1 =
\dots= x_n$. The key thing to note about this mapping is that, unlike in the jet case, the extension from
$M^{(n-1)}$ to~$M^{(n)}$ is not unique: it depends on the choice of the point $x_n$. This is something that
to be keenly aware of as we construct $\ddx$, the appropriate analogue for $\frac{{\rm d}}{{\rm d}x}$.

\subsection{Derivatives of discrete equations}

Before the infinitesimals can be computed, we will need to devise an operator that takes the
approximation for the $k$-th derivative of a function $u$ at a given point, $x_0$, denoted $\prol{u}{k}{k}(x_0)$, to
the approximation for the $(k + 1)$-st derivative. That is to say, we desire some operator$\ddx$
that mimics the smooth operator $\frac{{\rm d}}{{\rm d}x}$ in the sense that
\[
\frac{\Delta}{\Delta x}\bigl[\prol{u}{k}{k} \bigr]
= \prol{u}{k+1}{k+1}.\]

Some examination of the recursive definitions for the divided difference formulae shows that the appropriate
definition for the discrete derivative of $\prol{u}{k}{k}$ at the basepoint $x_r$ may be given by
\[
\frac{\Delta}{\Delta x}\bigl[\prol{u}{k}{k}(x_r) \bigr]
= \frac{k+1}{x_{k + r + 1} - x_{r}}(S-{\rm id})\bigl[ \prol{u}{k}{k}(x_r) \bigr] = \prol{u}{k+1}{r+1}(x_r),\]
where $S$ is the shift operator $S[u_i] = u_{i + 1}$.

The natural extension of this operator to a higher-order derivative $\frac{{\rm d}^n}{{\rm d}x^n}$ can then be obtained
by iteratively applying the definition
\begin{gather*}
\frac{\Delta^{^{(n)}}}{\Delta x^{^{(n)}}}\bigl[ \prol{u}{k}{k}(x_r) \bigr]
:= \frac{k + n}{x_{k + n + r} - x_{r}}(S-{\rm id})\\
\hphantom{\frac{\Delta^{^{(n)}}}{\Delta x^{^{(n)}}}\bigl[ \prol{u}{k}{k}(x_r) \bigr]
:=}{}\times
\left[ \frac{k+n-1}{x_{k + n + r - 1} - x_{r}}(S-{\rm id})\left[ \cdots\frac{k+1}{x_{k + r + 1} - x_{r}}(S-{\rm id})\bigl[ \prol{u}{k}{k}(x_r) \bigr] \cdots\right]\right] \\
\hphantom{\frac{\Delta^{^{(n)}}}{\Delta x^{^{(n)}}}\bigl[ \prol{u}{k}{k}(x_r) \bigr]}{}
\ = \prol{u}{k + n}{k + n}(x_r).
\end{gather*}

This definition may then be broadened to any expression $u_{r,\dots,k+r}$ depending on the points $\{ x_r,\dots, x_{k + r} \}$:
\[\ddf{u_{r,\dots,k+r}} = \frac{k+1}{x_{k + r + 1} - x_{r}}(S-{\rm id})\left[ u_{r,\dots,k+r}\right].\]

\begin{Proposition}\label{prop: div-rules}
 Let $\left\{ \Gamma_r \right\}$ be a sequence of lattices with $\Gamma_r = \left\{ x_r,\dots, x_{k + r} \right\}$ and $\Gamma_r\cap\Gamma_{r+1} = \left\{ x_{r+1},\dots, x_{k + r} \right\}$. Suppose that $u_{r,\dots,k+r}$ and $v_{r,\dots,k+r}$ are functions defined on $\Gamma_r$. Then the following properties hold:
 \begin{enumerate}\itemsep=0pt
 \item[$1.$] Formal commutation with shifts:
 \[\ev{\ddf{\sft{u_{r,\dots,k+r}}}}{\Gamma_{r+1}} = \sft{\ev{{\ddf{u_{r,\dots,k+r}}}}{\Gamma_r}},\]
 where the shift is assumed to modify the basepoint from $x_r$ to $x_{r + 1}$. That is, if $u_{r,\dots,k+r}$ is defined on a lattice $\Gamma_r$, then $($assuming that $\Gamma_{r + 1}$ is defined$)$ $S[ u_{r,\dots,k+r}]$ is the same algebraic expression, but defined on the lattice $\Gamma_{k + 1}$ given by incrementing the subscripts by~$1$.

 \item[$2.$] Product rule:
 \[\ddf{(uv)_{r,\dots,k+r}} = \ddf{u_{r,\dots,k+r}}v_{r,\dots,k+r} + \sft{u_{r,\dots,k+r}}\ddf{v_{r,\dots,k+r}}.\]
 \item[$3.$] Quotient rule:
 \[\ddf{\frac{u_{r,\dots,k+r}}{v_{r,\dots,k+r}}} = \frac{v_{r,\dots,k+r}\ddf{u_{r,\dots,k+r}} - u_{r,\dots,k+r}\ddf{v_{r,\dots,k+r}}}{v_{r,\dots,k+r}\cdot \sft{v_{r,\dots,k+r}}}.\]
 \end{enumerate}
\end{Proposition}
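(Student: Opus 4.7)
My plan is to observe that all three identities are purely algebraic consequences of the definition
\[\ddf{u_{r,\dots,k+r}} = \frac{k+1}{x_{k+r+1}-x_r}(S-\mathrm{id})[u_{r,\dots,k+r}],\]
together with the conventions for how the shift $S$ acts on lattice‑indexed expressions. So the strategy throughout is: expand each side as $(S-\mathrm{id})$ applied to something, scaled by an appropriate factor $(k+1)/(x_{k+r+1}-x_r)$, and then rearrange using the kind of telescoping identities one uses for the classical forward difference. The hypothesis that $\Gamma_r\cap\Gamma_{r+1}=\{x_{r+1},\dots,x_{k+r}\}$ is exactly what makes the shifted objects live on a lattice with the same structure, so that the coefficient in $\ddx$ merely changes by incrementing indices; keeping this bookkeeping straight is really the only nontrivial part.

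For property~1, I would simply write out both sides. On the left, $\sft{u_{r,\dots,k+r}}=u_{r+1,\dots,k+r+1}$ is an expression on $\Gamma_{r+1}$, and applying $\ddx$ at basepoint $x_{r+1}$ yields $\frac{k+1}{x_{k+r+2}-x_{r+1}}(u_{r+2,\dots,k+r+2}-u_{r+1,\dots,k+r+1})$. On the right, $\ddf{u_{r,\dots,k+r}}=\frac{k+1}{x_{k+r+1}-x_r}(u_{r+1,\dots,k+r+1}-u_{r,\dots,k+r})$, and the convention that $S$ increments every subscript by $1$ produces exactly the same expression. So the identity reduces to observing that the definition is formally natural under shifts.

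For property~2, the argument is the discrete Leibniz rule. I would use the telescoping decomposition
\[S[u]\,S[v]-uv=S[u]\bigl(S[v]-v\bigr)+\bigl(S[u]-u\bigr)v,\]
then multiply by $(k+1)/(x_{k+r+1}-x_r)$ and recognize the two halves as $\sft{u}\ddf{v}$ and $\ddf{u}\,v$. Property~3 is the same idea: put $S[u]/S[v]-u/v$ over the common denominator $v\cdot S[v]$ and rewrite the numerator as $(S[u]-u)v-u(S[v]-v)$; extracting the scaling factor turns it into $(\ddf{u}v-u\ddf{v})/(v\cdot S[v])$. No analytic input is needed — the only thing to check is that each operand really lives on the lattice I claim, which again follows from the overlap condition on consecutive $\Gamma_r$'s. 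I expect no serious obstacle beyond notational care; the main pitfall is forgetting that $\sft{u}$ is evaluated on $\Gamma_{r+1}$ while $u$ is on $\Gamma_r$, so that in the product and quotient rules the $S$-shifted factor is the correct one to pair with $\ddx$.
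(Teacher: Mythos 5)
Your proposal is correct and follows essentially the same route as the paper's proof: all three identities are verified by direct expansion of the definition of $\frac{\Delta}{\Delta x}$, with the product rule obtained from the same telescoping decomposition $S[u]S[v]-uv = S[u](S[v]-v)+(S[u]-u)v$ (the paper adds and subtracts $u_{r+1,\dots,k+r+1}v_{r,\dots,k+r}$, which is the same step) and the quotient rule by the same common-denominator rearrangement. Your emphasis on tracking which lattice each shifted expression lives on matches the paper's handling of the shift bookkeeping, so there is nothing to add.
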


\begin{proof}
1.~The formal commutation with shifts arises from a simple application of the definitions as follows:
 \begin{align*}
 \ev{\ddf{\sft{u_{r,\dots,k+r}}}}{\Gamma_{r+1}} &= \ddf{{u_{r+1,\dots,k+r+1}}}
 = \frac{k+1}{x_{k + r + 2} - x_{r+1}}(S-{\rm id}) [ {u_{r+1,\dots,r+k+1}} ]\\
 &= \sft{\frac{k+1}{x_{k + r + 1} - x_{r}}(S - {\rm id}) [ u_{r,\dots,k+r} ]}
 =\sft{\ev{{\ddf{u_{r,\dots,k+r}}}}{\Gamma_r}}.
 \end{align*}

2.~To obtain the product rule, we apply the definition of our differential operator once more and
 trace through the algebra:
 \begin{align*}
 \ddf{(uv)_{r,\dots,k+r}}={}& \frac{k+1}{x_{k + r + 1} - x_{r}}(S - {\rm id}) [ (uv)_{r,\dots,k+r} ]\\
={}& \frac{k+1}{x_{k + r + 1} - x_{r}} [ (uv)_{r+1,\dots,k+r+1} - (uv)_{r,\dots,k+r} ]\\
={}& \frac{k+1}{x_{k + r + 1} - x_{r}} [ (uv)_{r+1,\dots,k+r+1} - u_{r+1,\dots,k+r+1}v_{r,\dots,k+r} ]\\
& + \frac{k+1}{x_{k + r + 1} - x_{r}} [u_{r+1,\dots,k+r+1}v_{r,\dots,k+r} - (uv)_{r,\dots,k+r} ]\\
={}& \sft{u_{r,\dots,k+r}}\ddf{v_{r,\dots,k+r}} + \ddf{u_{r,\dots,k+r}}v_{r,\dots,k+r}.
 \end{align*}

3.~Lastly, for the quotient rule, we once more apply the definition of the operator and follow the
 algebra. Note that some special care needs to be taken to make sure that the shift operator is tracked appropriately:
 \begin{align*}
 \ddf{\frac{u_{r,\dots,k+r}}{v_{r,\dots,k+r}}}={}& \frac{k+1}{x_{k + r + 1} - x_{r}}(S - {\rm id}) [ \frac{u_{r,\dots,k+r}}{v_{r,\dots,k+r}} ]\\
={}&\frac{k+1}{x_{k + r + 1} - x_{r}}\left[ \frac{u_{r+1,\dots,r+1+k}}{v_{r+1,\dots,r+1+k}} - \frac{u_{r,\dots,k+r}}{v_{r,\dots,k+r}} \right]\\
 ={}& \frac{k+1}{x_{k + r + 1} - x_{r}}\left[ \frac{u_{r+1,\dots,r+1+k}v_{r,\dots,k+r} - u_{r,\dots,k+r}v_{r+1,\dots,r+1+k}}{v_{r,\dots,k+r}v_{r+1,\dots,r+1+k}}\right]\\
={}& \frac{k+1}{(x_{k + r + 1} - x_{r})v_{r,\dots,k+r}v_{r+1,\dots,r+1+k}} [ (u_{r+1,\dots,r+1+k} - u_{r,\dots,k+r})v_{r,\dots,k+r} \\
& + u_{r,\dots,k+r}(v_{r,\dots,k+r} - v_{r+1,\dots,r+1+k}) ] \\
={}& \frac{v_{r,\dots,k+r}\ddf{u_{r,\dots,k+r}} - u_{r,\dots,k+r}\ddf{v_{r,\dots,k+r}}}{v_{r,\dots,k+r}\cdot \sft{v_{r,\dots,k+r}}}.\tag*{\qed}
 \end{align*}\renewcommand{\qed}{}
\end{proof}

\begin{Corollary}[\cite{deBoordivided2005,Tibshirani2022}]\label{cor: prod-rule}
 If $u$ and $v$ are both defined on a lattice with basepoint $x_0$, then
 \[\ddfn{(uv)_{0,\dots,n}}{n} = \sum_{k = 0}^{n}\binom{n}{k} \sftn{\ddfn{u_{0,\dots,n}}{n-k}}{k}\ddfn{v_{0,\dots,n}}{k}.\]
\end{Corollary}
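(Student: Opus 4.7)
The plan is to prove the identity by induction on $n$. The base case $n = 1$ is precisely the product rule (statement 2 of Proposition~\ref{prop: div-rules}), since $\binom{1}{0}\sftn{\ddf{u}}{0}v + \binom{1}{1}\sft{u}\ddf{v}$ is exactly that formula. So the only real work is the inductive step: assuming the formula holds for some $n \geq 1$, apply $\ddx$ to both sides, expand using the product rule on each summand, and use the shift-commutation property (statement 1 of the same proposition) to repackage the result as the desired sum for $n+1$.

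Concretely, for the inductive step I would write
\[
\ddfn{(uv)_{0,\dots,n+1}}{n+1} = \ddf{\ddfn{(uv)_{0,\dots,n+1}}{n}} = \ddf{\sum_{k=0}^{n}\binom{n}{k}\sftn{\ddfn{u}{n-k}}{k}\ddfn{v}{k}},
\]
then apply Proposition~\ref{prop: div-rules}(2) to each term $\ddf{\sftn{\ddfn{u}{n-k}}{k}\ddfn{v}{k}}$. The product rule splits each summand into two pieces. On the first piece, I use shift-commutation iteratively to move $\ddx$ past $S^k$, producing $\sftn{\ddfn{u}{n-k+1}}{k}\ddfn{v}{k}$; on the second piece, the leading shift combines with the existing $S^k$ to give $S^{k+1}$, producing $\sftn{\ddfn{u}{n-k}}{k+1}\ddfn{v}{k+1}$.

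The result is two sums, one indexed by $k = 0,\dots,n$ and the other (after reindexing $k \mapsto k-1$) indexed by $k = 1,\dots,n+1$. The $k=0$ term of the first sum gives $\ddfn{u}{n+1} \cdot v$, the $k = n+1$ term of the reindexed second sum gives $\sftn{u}{n+1}\ddfn{v}{n+1}$, and the overlapping interior indices $1\leq k\leq n$ carry the coefficient $\binom{n}{k} + \binom{n}{k-1} = \binom{n+1}{k}$ by Pascal's rule. Combining these yields
\[
\ddfn{(uv)_{0,\dots,n+1}}{n+1} = \sum_{k=0}^{n+1}\binom{n+1}{k}\sftn{\ddfn{u}{n+1-k}}{k}\ddfn{v}{k},
\]
completing the induction.

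The main obstacle is not the algebra itself but the careful bookkeeping of lattices: each application of $\ddx$ and $S$ silently changes the underlying set of points on which the expressions are defined, so one must verify that statement 1 of Proposition~\ref{prop: div-rules} iterates cleanly to give $\ddx \circ S^k = S^k \circ \ddx$ on each intermediate lattice, and that all the lattices occurring in the two sums at step $n+1$ are compatible (this is where the hypothesis $\Gamma_r \cap \Gamma_{r+1} = \{x_{r+1},\dots,x_{k+r}\}$ from Proposition~\ref{prop: div-rules} is implicitly invoked). Once this compatibility is granted, the combinatorics reduce entirely to Pascal's identity, and no further analysis of the divided-difference determinants is needed.
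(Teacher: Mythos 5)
The paper offers no proof of this corollary (it is imported from de~Boor and Tibshirani), so your argument must stand on its own, and as written the inductive step has a genuine gap. The operator $\ddx$ is not one fixed operator: its normalization $\tfrac{k+1}{x_{k+r+1}-x_r}$ depends on which lattice its operand is defined on. When you differentiate the level-$n$ identity, the left-hand side $\ddfn{(uv)_{0,\dots,n}}{n}$ lives on all of $\{x_0,\dots,x_n\}$, so the relevant prefactor is $\tfrac{n+1}{x_{n+1}-x_0}$; but the two factors in the $k$-th summand live on the sub-lattices $\{x_k,\dots,x_n\}$ and $\{x_0,\dots,x_k\}$. With that prefactor one finds, for $k\geq 1$,
\[
\frac{n+1}{x_{n+1}-x_0}(S-\mathrm{id})\Bigl[\sftn{\ddfn{u}{n-k}}{k}\Bigr]
=\frac{(n+1)(x_{n+1}-x_k)}{(n+1-k)(x_{n+1}-x_0)}\,\sftn{\ddfn{u}{n+1-k}}{k},
\]
and similarly the $v$-piece acquires the factor $\tfrac{(n+1)(x_{k+1}-x_0)}{(k+1)(x_{n+1}-x_0)}$. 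These factors equal $1$ only for the endpoint terms or on an evenly spaced lattice, which is precisely the caveat the paper raises right after the corollary about $\ddf{u_{r,\dots,k+r}v_{r,\dots,l+r}}$ with $k\neq l$. So your claims that $\ddx$ slides past $S^k$ to produce $\sftn{\ddfn{u}{n-k+1}}{k}\ddfn{v}{k}$, and that the second piece is $\sftn{\ddfn{u}{n-k}}{k+1}\ddfn{v}{k+1}$, are false on a general non-uniform lattice; and non-uniform lattices are unavoidable here, since the whole point of the multispace setting is to allow arbitrary (coalescing) sample points. Consequently Pascal's rule alone does not close the induction.

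The strategy is repairable, but the bookkeeping is heavier than you indicate. Keeping the correct rational factors, the coefficient of $\sftn{\ddfn{u}{n+1-k}}{k}\ddfn{v}{k}$ at level $n+1$ collects $\binom{n}{k}\tfrac{(n+1)(x_{n+1}-x_k)}{(n+1-k)(x_{n+1}-x_0)}$ from summand $k$ and $\binom{n}{k-1}\tfrac{(n+1)(x_{k}-x_0)}{k(x_{n+1}-x_0)}$ from summand $k-1$; since $\tfrac{n+1}{n+1-k}\binom{n}{k}=\tfrac{n+1}{k}\binom{n}{k-1}=\binom{n+1}{k}$, these combine to $\binom{n+1}{k}\tfrac{(x_{n+1}-x_k)+(x_k-x_0)}{x_{n+1}-x_0}=\binom{n+1}{k}$, with the endpoint terms coming out correctly as well. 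In other words, the binomial coefficients arise from this telescoping of lattice-dependent factors, not from Pascal's identity applied to clean pieces; your argument as stated is valid only in the evenly spaced case $x_i=x_0+ih$, where every prefactor reduces to $1/h$. (The cited references reach the identity by a different route, essentially Steffensen's Leibniz formula for divided differences, obtained by comparing Newton forms of interpolating polynomials.)
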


Of course, we may not always desire to take the discrete derivative of a product where each term in the product depends on exactly the same number of sample points (as in the above proposition). Indeed, as it currently stands, the following derivative is not well defined when $k \neq l$:
\begin{equation*}%\label{eq: amb_deriv}
 \ddf{u_{r,\dots,k + r}v_{r,\dots,l + r}}.
\end{equation*}

We can fix this problem if we take our lattices to be evenly spaced so that $x_i = x_0 + ih$ for some $h > 0$.
Then we have
\[\frac{k+1}{x_{k + 1} - x_{0}}=\frac{l+1}{x_{l + 1} - x_{0}}=\frac{k + l +1}{x_{k + l + 1} - x_{0}} = \frac{1}{h}.\]

However, lattices with such restrictions have already been studied rather extensively (see
\cite{dorodnitsyn2001307,Dorodnitsyn_2022, mansfield2019moving1,mansfield2019moving2}), so it would be a bit
redundant to cover it again here. In addition, in order to work on the multispace, \textit{we must allow for
any pair of points to coalesce}, and so restricting to only evenly spaced lattices will not allow us to
generalize the method for prolongation of the group action in the way that we desire.

\section{Computation of the infinitesimals}

Now that the differential structure for general divided differences has been formally defined, it
is possible to move on to the computation of the infinitesimals for group action on multispace.
Special care needs to be taken in the computation of these infinitesimals, however. Not only do the
infinitesimals need to be well defined, but they also need to
behave in a way that is consistent with the prolongation formulae for
jet space. Then, so long as the full coalescent limit is defined, it will be defined for any subset of
points (hence, the entire multispace), and we will be able to obtain a formula for the action of the group
on a prolongation of any multispace curve. In this way, our general strategy for this section will be to
begin with finding the prolongation formulae for group actions on the multispace coordinates, and then we
may check to see that as points coalesce down to a single basepoint, we obtain equations~\eqref{eq:
useful-derivative} and~\eqref{eq: 1d-recursion-smooth}.

Within the first section, we will assume that all of our sample points are distinct so that we may obtain
the recursive formulae, but we shall find in the second section that the coalescent limit is well behaved.
As such, to obtain recursive formulas for sets of non-distinct points, it is enough to start with the
general formula for distinct points and then apply the coalescent limit.

\subsection{Infinitesimal actions}

The first thing that needs to be established is what it means for a Lie group to act on multispace.
Given some set of points $(z_i) = (x_i, u_i)$ and an action of $G$ on $M$, the action of an element
$g\in G$ on this set produces a new set of points $(\widetilde{z}_i) = ((\widetilde{x}_i,\widetilde{u}_i)) = g\cdot
((x_i,u_i)) = g\cdot (z_i)$\footnote{The reader should note that, under the action $\widetilde{x_i}$ and $\widetilde{u}_i$
may depend on both $\vec{x}$ and $\vec{u}$.} to which we apply the definition of the multispace
coordinates. In terms of Vandermonde matrices, we may then write
\[g\cdot \dd{n} = \tdd{n} = \left|\begin{matrix}
 1 & \widetilde{x}_{0} & (\widetilde{x}_{0})^{2} & \cdots & (\widetilde{x}_{0})^{n} \\
 1 & \widetilde{x}_{1} & (\widetilde{x}_{1})^{2} & \cdots & (\widetilde{x}_{1})^{n} \\
\vdots & \vdots & \vdots & \ddots & \vdots \\
 1 & \widetilde{x}_{n} & (\widetilde{x}_{n})^{2} & \cdots & (\widetilde{x}_{n})^{n} \\
 \end{matrix}\right|,\]
and, similarly,
\[g\cdot \dd{n}\bigl(u,x^i\bigr) = \dd{n}\bigl(\widetilde{u},\widetilde{x}^i\bigr) = \left|\begin{matrix}
 1 & \widetilde{x}_{0} & \cdots & (\widetilde{x}_{0})^{i-1} & \widetilde{u}_0 & (\widetilde{x}_{0})^{i + 1}& \cdots & (\widetilde{x}_{0})^{n} \\
 \vdots & \vdots & \ddots & \vdots & \vdots & \vdots & \ddots & \vdots \\
 \vdots & \vdots & \ddots & \vdots & \vdots & \vdots & \ddots & \vdots \\
 1 & \widetilde{x}_{n} & \cdots & (\widetilde{x}_{n})^{i-1} & \widetilde{u}_n & (\widetilde{x}_{n})^{i + 1}& \cdots & (\widetilde{x}_{n})^{n}
\end{matrix}\right|.\]

To state and prove Lemma~\ref{lem: van_action}, it is also necessary to introduce an analogous notation for replacing
multiple columns of our Vandermonde matrix, but the adjustment is easily demonstrated with the following example:
\[\dd{4}\bigl(\bigl[u,x^2\bigr];\bigl[v,x^4\bigr]\bigr) = \left| \begin{matrix*}
 1 & x_0 & u_0 & x_0^3 & v_0\\
 1 & x_1 & u_1 & x_1^3 & v_1\\
 1 & x_2 & u_2 & x_2^3 & v_2\\
 1 & x_3 & u_3 & x_3^3 & v_3\\
 1 & x_4 & u_4 & x_4^3 & v_4
\end{matrix*} \right|.\]

\begin{Lemma}[\cite{zandra_2019}]\label{lem: van_action}
 \begin{equation}\label{eq: detid}
\bigl( \dd{k}\bigr)\dd{k}\bigl(\bigl[u,x^i\bigr]; \bigl[v,x^j\bigr]\bigr) = \dd{k}(u,x^i)\dd{k}\bigl(v,x^j\bigr) - \dd{k}\bigl(v,x^i\bigr)\dd{k}\bigl(u,x^j\bigr).
 \end{equation}
\end{Lemma}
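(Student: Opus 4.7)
The plan is to prove \eqref{eq: detid} as an instance of a classical Plücker-type determinant identity. The key idea is to expand $u$ and $v$ as linear combinations of the Vandermonde columns (valid by Cramer's rule), substitute into the left-hand side, and use multilinearity of the determinant; repeated columns will kill all but two terms, and the resulting signed difference is exactly what appears on the right. Without loss of generality I may assume $i<j$; the identity is polynomial in the $u_\ell$, $v_\ell$, and $x_\ell$, so it suffices to prove it on the Zariski-open set where $\dd{k}\neq 0$, and the full statement then follows by continuity.

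Write $V_\ell=\bigl(x_0^\ell,\dots,x_k^\ell\bigr)^T$ for the Vandermonde columns, so that $\dd{k}=\det[V_0\mid V_1\mid\cdots\mid V_k]$, and set $\mathbf u=(u_0,\dots,u_k)^T$, $\mathbf v=(v_0,\dots,v_k)^T$. Cramer's rule applied to the systems $V\mathbf c=\mathbf u$ and $V\mathbf d=\mathbf v$ produces coefficients
\[ c_\ell=\frac{\dd{k}\bigl(u,x^\ell\bigr)}{\dd{k}},\qquad d_\ell=\frac{\dd{k}\bigl(v,x^\ell\bigr)}{\dd{k}}, \]
so that $\mathbf u=\sum_\ell c_\ell V_\ell$ and $\mathbf v=\sum_\ell d_\ell V_\ell$.

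Substituting these expansions into $\dd{k}\bigl([u,x^i];[v,x^j]\bigr)$ and expanding by multilinearity in columns $i$ and $j$ yields a double sum of determinants of the form $\det[\cdots V_\ell\ (\text{at position }i)\cdots V_m\ (\text{at position }j)\cdots]$. Any term in which $\ell\notin\{i,j\}$ or $m\notin\{i,j\}$ has a repeated column and vanishes, as does $\ell=m$. Only $(\ell,m)=(i,j)$ and $(\ell,m)=(j,i)$ survive: the first reproduces $V$ itself and contributes $c_id_j\dd{k}$, while the second, in which $V_j$ sits at position $i$ and $V_i$ at position $j$, equals $-c_jd_i\dd{k}$ after undoing the column swap. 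Hence
\[ \dd{k}\bigl([u,x^i];[v,x^j]\bigr) = (c_id_j-c_jd_i)\dd{k} = \frac{\dd{k}(u,x^i)\dd{k}(v,x^j)-\dd{k}(v,x^i)\dd{k}(u,x^j)}{\dd{k}}, \]
and multiplication by $\dd{k}$ gives \eqref{eq: detid}. There is no substantive obstacle here: the only subtle bookkeeping is the sign produced by the column swap in the $(j,i)$ term, and this is precisely the mechanism that manufactures the minus sign on the right-hand side.
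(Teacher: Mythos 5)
Your proof is correct and is essentially the paper's own argument: the paper likewise expands $u$ and $v$ in the Vandermonde column basis, with coefficients given by the interpolating polynomials $p$, $q$ (exactly your Cramer's-rule coefficients $c_\ell$, $d_\ell$), writes $V\bigl(\bigl[u,x^i\bigr];\bigl[v,x^j\bigr]\bigr) = V\,I([p,i];[q,j])$, and uses multiplicativity of the determinant to reduce to the $2\times 2$ minor $a_i b_j - a_j b_i$ --- precisely your two surviving terms. The only cosmetic differences are that you evaluate the determinant by multilinearity and column-repetition cancellation instead of by the matrix factorization, and you add a Zariski-density remark to cover $\dd{k}=0$, which is harmless but unnecessary in the paper's setting where the lattice points are assumed distinct.
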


\begin{proof}
 Let $p(x)$ be the interpolating polynomial for the function $u(x)$ at the points $\left\{ (x_i,u_i) \right\}_{i = 0}^k$ and let $q(x)$ be the interpolating polynomial for the function $v(x)$ at the points $\left\{ (x_i,v_i) \right\}_{i = 0}^k$. We begin by observing that $p(x)$ and $q(x)$ can be given the form
 \begin{gather*}
 p(x) = a_0 + a_1x + \cdots + a_kx^k,\qquad
 q(x) = b_0 + b_1x + \cdots + b_kx^k.
 \end{gather*}
 Define $I(p,i)$ to be the $(k + 1)\times (k + 1)$ identity matrix with the $i$-th column (indexed from~$0$) replaced by the coefficients of~$p$. If we let $V$ denote the standard Vandermonde matrix and $V(u,x^i)$ denote the Vandermonde matrix with the column corresponding to $x^i$ replaced by the column vector $(u_0,\dots, u_k)$, then we have that
 \[V\bigl(u,x^i\bigr) = V I(p,i).\]
 And so
 \begin{gather*}
 \dd{k}\bigl(u,x^i\bigr) = \det(V I(p,i)) = \dd{k}\det(I(p,i)) = \dd{k}a_i.
 \end{gather*}
 Using similar notation, we can obtain
 \[V\bigl(\bigl[u,x^i\bigr];\bigl[v,x^j\bigr]\bigr) = V\,I([p,i];[q,j]).\]
 Thus,
 \begin{gather*}
 \bigl( \dd{k} \bigr)\dd{k}\bigl(\bigl[u,x^i\bigr]; \bigl[v,x^j\bigr]\bigl) = \bigl( \dd{k} \bigr)\det(V\,I([p,i];[q,j]))
 = \bigl( \dd{k} \bigr)^2\det(I([p,i];[q,j]))\\
\hphantom{\bigl( \dd{k} \bigr)\dd{k}\bigl(\bigl[u,x^i\bigr]; \bigl[v,x^j\bigr]\bigl)}{}
= \bigl( \dd{k} \bigr)^2\det\begin{pmatrix}a_i&b_i\\a_j&b_j\end{pmatrix}
 = \dd{k}a_i\dd{k}b_j - \dd{k}b_i\dd{k}a_j\\
\hphantom{\bigl( \dd{k} \bigr)\dd{k}\bigl(\bigl[u,x^i\bigr]; \bigl[v,x^j\bigr]\bigl)}{}
 = \dd{k}\bigl(u,x^i\bigr)\dd{k}\bigl(v,x^j\bigr) - \dd{k}\bigl(v,x^i\bigr)\dd{k}\bigl(u,x^j\bigr).\tag*{\qed}
 \end{gather*}\renewcommand{\qed}{}
\end{proof}

Recall that we defined
\[\prol{u}{k}{k} := k!\frac{\dd{k}\bigl(u,x^k\bigr)}{\dd{k}},\]
where $\dd{k}\bigl(u,x^k\bigr)$ stands for the determinant of the Vandermonde matrix with column $x^k$ replaced with $\vec{u}$. To compute the infinitesimal action in terms of multispace coordinates, we will also need to introduce the notation
\[\ulk{\mu}{k}{\ell}:= k!\frac{\dd{k}\bigl(u,x^\ell\bigl)}{\dd{k}},\]
where the upper parenthetical index is used to keep track of the number of points used to construct the
derivative approximation (in this case, $k + 1$ points), and the lower index tracks which column is
replaced in the numerator of the above equation to obtain the derivative information. We may now compute
the infinitesimal action on the multispace coordinates: Let~$\gamma(\varepsilon)$ be a~curve in~$G$ with
$\gamma(0) = e$, let $(\widetilde{x}_i,\widetilde{u}_i) = \gamma(\varepsilon) \cdot (x_i, u_i)$, and let
$\ulk{\varphi}{k}{[k]}$ denote the infinitesimal%
\[\ulk{\varphi}{k}{[k]} := \dep {\proltd{u}{k}{k}}.\]
\begin{Proposition}\label{prop: discrete_action_computation}
 \begin{equation}\label{eq: discrete_action_computation}
 \ulk{\varphi}{k}{[k]} = \prol{\varphi}{k}{k} - \frac{1}{k!}\sum\limits_{\ell = 0}^k \ulk{\mu}{k}{\ell}\prol{\bigl(\ell x^{\ell -1}\xi\bigr)}{k}{k}.
 \end{equation}
\end{Proposition}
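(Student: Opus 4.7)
The plan is to compute $\ulk{\varphi}{k}{[k]} = \dep\bigl[k!\,\tdd{k}(\widetilde{u},\widetilde{x}^k)/\tdd{k}\bigr]$ via the quotient rule and then to collapse the resulting sums of determinants using Lemma~\ref{lem: van_action}. The first step is to evaluate each of $\dep \tdd{k}$ and $\dep \tdd{k}(\widetilde{u},\widetilde{x}^k)$ column-by-column, using multilinearity of the determinant together with the chain-rule identities $\dep[\widetilde{x}_i^\ell] = \ell x_i^{\ell - 1}\xi_i$ and $\dep[\widetilde{u}_i] = \varphi_i$. Since the leading column is constantly $\mathbf{1}$ and so contributes nothing, this yields
\begin{gather*}
\dep \tdd{k} = \sum_{\ell = 1}^{k} \dd{k}\bigl(\ell x^{\ell-1}\xi,\, x^\ell\bigr),\\
\dep \tdd{k}(\widetilde{u},\widetilde{x}^k) = \sum_{\ell = 1}^{k-1} \dd{k}\bigl(\bigl[\ell x^{\ell-1}\xi,\, x^\ell\bigr];\,\bigl[u,\, x^k\bigr]\bigr) + \dd{k}\bigl(\varphi,\, x^k\bigr),
\end{gather*}
where the last summand comes from differentiating the $\widetilde{u}$-column.

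Assembling the quotient rule, the term $k!\,\dd{k}(\varphi,x^k)/\dd{k}$ immediately reproduces $\prol{\varphi}{k}{k}$. The remaining contribution will match the target sum through a term-by-term pairing: for each $1 \leq \ell \leq k - 1$, the double-replacement determinant $\dd{k}([\ell x^{\ell - 1}\xi, x^\ell]; [u, x^k])/\dd{k}$ pairs with the $\ell$-indexed term $-\dd{k}(u, x^k)\dd{k}(\ell x^{\ell - 1}\xi, x^\ell)/(\dd{k})^2$ coming from the derivative of the denominator. Applying Lemma~\ref{lem: van_action} with $i = \ell$, $j = k$ forces these two contributions to collapse into $-\dd{k}(u, x^\ell)\dd{k}(\ell x^{\ell - 1}\xi, x^k)/(\dd{k})^2$, which, after multiplying by $k!$ and rewriting via $\ulk{\mu}{k}{\ell}$ and $\prol{(\ell x^{\ell-1}\xi)}{k}{k}$, is exactly the $\ell$-th summand of the claimed formula. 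The leftover $\ell = k$ contribution from $\dep \tdd{k}$ has no matching double-replacement partner but on its own produces the $\ell = k$ summand of the target (using $\ulk{\mu}{k}{k} = \prol{u}{k}{k}$); and the $\ell = 0$ summand vanishes automatically because $\ell x^{\ell-1}\xi = 0$ there, so the index range may be extended down to $0$ to match the stated bounds.

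The main obstacle I anticipate is the bookkeeping: the two differentiated determinants produce sums over $\ell$ with different ranges ($1,\dots,k-1$ versus $1,\dots,k$), and one must track carefully which terms annihilate under Lemma~\ref{lem: van_action} and which survive. The critical step is recognizing that for each $\ell < k$ the ``diagonal'' product $\dd{k}(u, x^k)\dd{k}(\ell x^{\ell - 1}\xi, x^\ell)$ cancels between the two sums while the ``off-diagonal'' $\dd{k}(u, x^\ell)\dd{k}(\ell x^{\ell - 1}\xi, x^k)$ is precisely the shape required by the proposition. Once this cancellation is in hand, and the boundary indices $\ell = 0$ and $\ell = k$ are checked as above, the identity \eqref{eq: discrete_action_computation} follows directly.
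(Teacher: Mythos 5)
Your proposal is correct and follows essentially the same route as the paper: differentiate the quotient $k!\,\dd{k}\bigl(\widetilde{u},\widetilde{x}^k\bigr)/\tdd{k}$ at $\varepsilon=0$, expand the two $\varepsilon$-derivatives of determinants column-by-column, and use Lemma~\ref{lem: van_action} with $i=\ell$, $j=k$ to collapse the double-replacement terms against the denominator contributions, leaving exactly $-\dd{k}\bigl(u,x^\ell\bigr)\dd{k}\bigl(\ell x^{\ell-1}\xi,x^k\bigr)/\bigl(\dd{k}\bigr)^2$ for each $\ell$. In fact your write-up makes explicit the bookkeeping (the differing index ranges, the unpaired $\ell=k$ term, and the vacuous $\ell=0$ term) that the paper's displayed computation compresses into a single application of equation~\eqref{eq: detid}.
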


\begin{proof}
 We proceed by unraveling the definition of the transformed coordinates and then carefully keep track of the
 derivative with respect to $\varepsilon$:
 \begin{align*}
 \ulk{\varphi}{k}{[k]} &=\dep {\proltd{u}{k}{k}} = \dep \frac{k!\,\dd{k}\bigl(\widetilde{u},\widetilde{x}^k\bigr)}{\tdd{k}}\\
 &= k! \frac{\bigl( \dd{k} \bigr) \dep \dd{k}\bigl(\widetilde{u},\widetilde{x}^k\bigr) - \dd{k}\bigl(u,x^k\bigr) \dep \tdd{k}}{\bigl( \dd{k} \bigr)^2}\\
 &\overset{\eqref{eq: detid}}{=} k! \frac{\bigl( \dd{k} \bigr)\dd{k}\bigl(\varphi,x^k\bigr) - \sum\limits_{\ell = 1}^{k} \bigl[ \bigl( \dd{k}\bigl(u,x^\ell\bigr) \bigr)\dd{k}\bigl(\ell x^{\ell - 1} \xi,x^{k}\bigr) \bigr]}{\bigl( \dd{k} \bigr)^2} \\
 &\overset{\rm def}{=} \prol{\varphi}{k}{k} - \frac{1}{k!}\sum\limits_{\ell = 1}^k \ulk{\mu}{k}{\ell}\prol{\bigl(\ell x^{\ell -1}\xi\bigr)}{k}{k} .\tag*{\qed}
 \end{align*}\renewcommand{\qed}{}
\end{proof}

Unfortunately, this formula does not resemble the formulas for $\varphi_{[k]}^{(k)}$ given in equations~\eqref{eq:
1d-recursion-smooth}, \eqref{eq: 1d-rec-expansion}, and \eqref{eq: useful-derivative} in any meaningful way.
Indeed, if we were to expand out the definitions of $\ulk{\mu}{k}{\ell}$ and $\xilk{\ell}{\ell - 1}{k}$ to
write them in terms of the multispace coordinates, the resulting formula is a mess of
elementary and homogeneous symmetric polynomials in $(x_0,\dots, x_k)$. This is demonstrated below in the expansion
of $\ulk{\varphi}{3}{[3]}$:
\begin{gather*}
 \ulk{\varphi}{3}{[3]} = \prol{\varphi}{3}{3} - \frac{1}{3!}\sum\limits_{\ell = 0}^3 \ulk{\mu}{3}{\ell}\prol{\bigl(\ell x^{\ell -1}\xi\bigr)}{3}{3}
 = \prol{\varphi}{3}{3} - \frac{1}{3!}\bigl[\ulk{\mu}{3}{1} \xilk{1}{0}{3}+ \ulk{\mu}{3}{2}\xilk{2}{1}{3} + \ulk{\mu}{3}{3}\xilk{3}{2}{3}\bigr]\\
\hphantom{\ulk{\varphi}{3}{[3]}}{}
 = \prol{\varphi}{3}{3} - \frac{1}{3!}\Bigl[ \bigl( 6\prol{u}{1}{1} - 3(x_0 + x_1)\prol{u}{2}{2} + (x_0x_1 + x_0x_2 + x_1x_2)\prol{u}{3}{3}\bigr)\prol{\xi}{3}{3} \\
\hphantom{\ulk{\varphi}{3}{[3]}=}{}
 + \bigl( 3\prol{u}{2}{2} - (x_0 + x_1 + x_2)\prol{u}{3}{3}\bigr)\bigl( 2x_3\prol{\xi}{3}{3} + 6\prol{\xi}{2}{2} \bigr) \\
\hphantom{\ulk{\varphi}{3}{[3]}=}{}
+ \prol{u}{3}{3}\bigl( 3x_3^2\prol{\xi}{3}{3} + 9(x_2 + x_3)\prol{\xi}{2}{2} + 18\prol{\xi}{1}{1}\bigr) \Bigr].
\end{gather*}

Nevertheless, these formulas still satisfy a relationship in multispace space analogous to the recursion relationship \eqref{eq: 1d-recursion-smooth} in smooth space.

\begin{Theorem}\label{thm: miraculous_formula}
 Let $\prol{\varphi}{k}{k}$ and $\ulk{\varphi}{k}{[k]}$ be as in Proposition~{\rm \ref{prop: discrete_action_computation}}. Then the recursion relations for the infinitesimals of a group action on the multispace of one independent variable are given by
 \begin{equation}\label{eq: rec_discrete}
 \ulk{\varphi}{k+1}{[k+1]} = \ddx\bigl[ \ulk{\varphi}{k}{[k]}\bigr] - \prol{u}{k+1}{k+1}\left( \frac{1}{x_{k + 1} - x_0}(S^{k+1} - {\rm id})\bigl[ \prol{\xi}{0}{0} \bigr] \right).
 \end{equation}
\end{Theorem}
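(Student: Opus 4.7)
The plan is to bypass the explicit Vandermonde formula from Proposition~\ref{prop: discrete_action_computation} and to work directly with the defining relation $\ulk{\varphi}{k}{[k]} = \dep\bigl[\proltd{u}{k}{k}\bigr]$, exploiting the fact that Newton's divided-difference recursion is a purely algebraic identity that survives the action of $\gamma(\varepsilon)$. Concretely, since $\widetilde{u}[\widetilde{x}_{0},\dots,\widetilde{x}_{k+1}] = \bigl(\widetilde{u}[\widetilde{x}_{1},\dots,\widetilde{x}_{k+1}] - \widetilde{u}[\widetilde{x}_{0},\dots,\widetilde{x}_{k}]\bigr)/(\widetilde{x}_{k+1}-\widetilde{x}_{0})$, we have
\[
\proltd{u}{k+1}{k+1}(x_0) = (k+1)\,\frac{\proltd{u}{k}{k}(x_1) - \proltd{u}{k}{k}(x_0)}{\widetilde{x}_{k+1} - \widetilde{x}_0} = (k+1)\,\frac{S\bigl[\proltd{u}{k}{k}\bigr] - \proltd{u}{k}{k}}{\widetilde{x}_{k+1} - \widetilde{x}_0}.
\]

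First, I would apply $\dep$ to this identity via the quotient rule. Using $\widetilde{x}_i|_{\varepsilon = 0} = x_i$, $\dep \widetilde{x}_i = \xi_i$, and the fact that $\dep$ commutes with the shift $S$ (because the action is defined pointwise, so $\dep\bigl(S[\proltd{u}{k}{k}]\bigr) = S\bigl[\ulk{\varphi}{k}{[k]}\bigr]$), the calculation gives
\[
\ulk{\varphi}{k+1}{[k+1]} = \frac{k+1}{x_{k+1}-x_0}\bigl(S\bigl[\ulk{\varphi}{k}{[k]}\bigr] - \ulk{\varphi}{k}{[k]}\bigr) - \frac{k+1}{(x_{k+1}-x_0)^2}\bigl(S\bigl[\prol{u}{k}{k}\bigr] - \prol{u}{k}{k}\bigr)(\xi_{k+1}-\xi_0).
\]

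Second, I would recognize the first summand as $\ddx\bigl[\ulk{\varphi}{k}{[k]}\bigr]$ by the very definition of the discrete derivative from Section~3.2, and use that same definition to collapse the prefactor $(k+1)\bigl(S[\prol{u}{k}{k}] - \prol{u}{k}{k}\bigr)/(x_{k+1}-x_0)$ in the second summand into $\prol{u}{k+1}{k+1}$. Finally, since $\prol{\xi}{0}{0} = \xi_0$ and $S^{k+1}\bigl[\prol{\xi}{0}{0}\bigr] = \xi_{k+1}$, the remaining factor $(\xi_{k+1}-\xi_0)/(x_{k+1}-x_0)$ is exactly $\frac{1}{x_{k+1}-x_0}(S^{k+1}-{\rm id})\bigl[\prol{\xi}{0}{0}\bigr]$, yielding~\eqref{eq: rec_discrete}.

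The calculation as described has no serious obstacle; the only point that merits care is that $S$ and $\dep$ do in fact commute, which follows from the pointwise definition $(\widetilde{x}_i,\widetilde{u}_i) = \gamma(\varepsilon)\cdot(x_i,u_i)$. As a sanity check, I would then verify that the full coalescent limit $x_1,\dots,x_{k+1}\to x_0$ sends the identity to the smooth recursion~\eqref{eq: 1d-recursion-smooth}, since in that limit $(\xi_{k+1}-\xi_0)/(x_{k+1}-x_0)\to {\rm d}\xi/{\rm d}x$ and the discrete objects $\prol{u}{k+1}{k+1}$, $\ulk{\varphi}{k}{[k]}$, $\ddx$ converge to $u^{(k+1)}$, $\varphi_{[k]}$, and ${\rm d}/{\rm d}x$, respectively.
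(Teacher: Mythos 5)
Your proposal is correct and follows essentially the same route as the paper's own proof: both apply $\dep$ to the divided-difference recursion $\proltd{u}{k+1}{k+1} = (k+1)\bigl(S[\proltd{u}{k}{k}] - \proltd{u}{k}{k}\bigr)/(\widetilde{x}_{k+1}-\widetilde{x}_0)$ via the quotient rule, commute $\dep$ with the shift, and then recognize the two resulting terms as $\ddx\bigl[\ulk{\varphi}{k}{[k]}\bigr]$ and $\prol{u}{k+1}{k+1}\cdot\frac{1}{x_{k+1}-x_0}(S^{k+1}-{\rm id})\bigl[\prol{\xi}{0}{0}\bigr]$. The paper likewise never invokes the explicit Vandermonde formula of Proposition~\ref{prop: discrete_action_computation} at this step, so your ``bypass'' is in fact the published argument.
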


\begin{proof}
 \begin{gather*}
 \ulk{\varphi}{k+1}{[k+1]} = \dep {\proltd{u}{k+1}{k+1}}
 = (k + 1)\dep\left[ \frac{(S - {\rm id})\left[ \proltd{u}{k}{k}\right]}{\widetilde{x}_{k+1} - \widetilde{x}_{0}} \right]\\
\hphantom{\ulk{\varphi}{k+1}{[k+1]}}{}
= (k + 1)\frac{(x_{k + 1}- x_{0}) (S - {\rm id})\bigl[\dep \proltd{u}{k}{k} \bigr] - (S - {\rm id})\bigl[ \prol{u}{k}{k}\bigr]\dep\bigl[ (\widetilde{x}_{k+1} - \widetilde{x}_0) \bigr]}{(x_{k + 1}- x_{0})^2}\\
\hphantom{\ulk{\varphi}{k+1}{[k+1]}}{}
 = (k + 1)\frac{(S - {\rm id})\bigl[\ulk{\varphi}{k}{[k]} \bigr]}{x_{k + 1}- x_{0}} -
 (k + 1)\frac{(S - {\rm id})\bigl[ \prol{u}{k}{k}\bigr] (S^{k + 1} - {\rm id})\bigl[ \prol{\xi}{0}{0} \bigr]}{(x_{k + 1}- x_{0})^2}\\
\hphantom{\ulk{\varphi}{k+1}{[k+1]}}{}
= \ddx\bigl[ \ulk{\varphi}{k}{[k]} \bigr] - \prol{u}{k+1}{k+1}\left( \frac{1}{x_{k + 1} - x_0}(S^{k+1} - {\rm id})\bigl[ \prol{\xi}{0}{0} \bigr] \right).\tag*{\qed}
 \end{gather*}\renewcommand{\qed}{}
\end{proof}

It is worth taking a moment to consider what Theorem~\ref{thm: miraculous_formula} accomplishes. First of
all, the recursive formula~\eqref{eq: rec_discrete} makes the practical computation of the infinitesimals for the action on the
multispace of curves tractable.
Note that this convenience is directly opposed to the preferred
methodology for jet space prolongation where we often used the expanded formula~\eqref{eq: 1d-rec-expansion}
over the recursive formula~\eqref{eq: 1d-recursion-smooth}.
Even for simple actions, the computation of the infinitesimals in multispace
according to the formulas described in Proposition~\ref{prop: discrete_action_computation} can involve some intense
determinants with highly non-trivial simplifications (see the end of Example~\ref{ex: scaling} for a taste of the
types of rational expressions that appear here). Worse yet, at any given level~$k$, there
are $(2k + 1)$ determinants to compute, and very few of the determinants computed for smaller values of
$k$ may be reused
to compute an infinitesimal for any larger
value of $k$. The formula given in Theorem~\ref{thm: miraculous_formula}, on the other hand, allows us to take
advantage of the properties shown in Proposition~\ref{prop: div-rules} to write down explicit formulas for any
given $\ulk{\varphi}{k}{[k]}$. Then, the computations for any value $k$ will depend heavily on computations
at values $<k$. Practically speaking, this allows for the caching of some terms computed at values
smaller than $k$ to save time in the computation at level~$k$.

\subsection{The coalescent limit}

We are now ready to present the main result of this paper. As previously seen, it is possible to define an
operator on~$M^{(n)}$ that allows for the prolongation of an $n$-pointed curve to an $(n + 1)$-pointed curve.
Moreover,
the infinitesimals for this action have a nearly identical recursive differential relationship to
those for the analogous action on jet space. Since the jet space appears as an embedded
smooth submanifold of multispace, this then suggests that the general recursion relations for prolonging a
group action on jet space are inherited from those on the larger multispace. More specifically, this suggests
that the coalescent limits of the prolongation formulas~\eqref{eq: discrete_action_computation} and \eqref{eq:
rec_discrete} in multispace should be exactly the prolongation formula~\eqref{eq: 1d-recursion-smooth} in the
jet space.

\begin{Theorem}\label{thm: clim-formula}
 \[\clim_{(x_0,\dots,x_n)\rightarrow x} \ulk{\varphi}{k}{[k]} = \varphi_{[k]}.\]
\end{Theorem}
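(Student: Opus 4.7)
My plan is to proceed by induction on $k$, using Theorem~\ref{thm: miraculous_formula} on the multispace side and equation~\eqref{eq: 1d-recursion-smooth} on the jet space side. The base case $k = 0$ is immediate: $\ulk{\varphi}{0}{[0]} = \varphi(x_0, u_0)$ does not depend on any points beyond $x_0$, so its coalescent limit as $(x_0,\dots,x_n) \to x$ is just $\varphi(x, u(x)) = \varphi_{[0]}$. For the inductive step, assume $\clim \ulk{\varphi}{k}{[k]} = \varphi_{[k]}$, and apply $\clim$ to both sides of the recursion
\[\ulk{\varphi}{k+1}{[k+1]} = \ddx\bigl[ \ulk{\varphi}{k}{[k]}\bigr] - \prol{u}{k+1}{k+1}\left( \frac{1}{x_{k + 1} - x_0}(S^{k+1} - \mathrm{id})\bigl[ \prol{\xi}{0}{0} \bigr] \right).\]

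The coalescent limit of each factor can be identified individually. First, by the corollary following Proposition~\ref{prop: lagrange_approx} together with the convenient identity $\clim \prol{u}{k+1}{k+1} = u^{(k+1)}(x)$, the factor $\prol{u}{k+1}{k+1}$ tends to $u^{(k+1)}$. Second, evaluating along the curve, the expression $\frac{1}{x_{k+1} - x_0}(S^{k+1} - \mathrm{id})[\prol{\xi}{0}{0}]$ is literally the classical divided difference $\xi[x_0, x_{k+1}]$ (up to a factor reflecting that only two of the $k+2$ points are used), and as $x_{k+1} \to x_0 \to x$ it coalesces to the total derivative $\tfrac{d}{dx}\xi$ along the curve. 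Finally, the term $\ddx[\ulk{\varphi}{k}{[k]}]$ is a discrete derivative built from the divided-difference recursion, and its coalescent limit is $\tfrac{d}{dx}[\varphi_{[k]}]$ by the inductive hypothesis combined with the fact that $\ddx$ is defined precisely to be the divided-difference operator whose coalescent limit is the total derivative.

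Putting these together, the coalescent limit of the right-hand side of the recursion becomes $\tfrac{d}{dx}[\varphi_{[k]}] - u^{(k+1)} \tfrac{d}{dx}\xi$, which is exactly $\varphi_{[k+1]}$ by equation~\eqref{eq: 1d-recursion-smooth}. This closes the induction.

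The main obstacle is the third item above: justifying rigorously that $\clim$ commutes with $\ddx$, i.e., that
\[\clim_{(x_0,\dots,x_{k+1}) \to x} \ddx\bigl[\ulk{\varphi}{k}{[k]}\bigr] = \frac{d}{dx}\Bigl[\clim_{(x_0,\dots,x_k) \to x} \ulk{\varphi}{k}{[k]}\Bigr].\]
By the very construction of $\ddx$, the left-hand side is a classical divided difference of $\ulk{\varphi}{k}{[k]}$ viewed as a function on the lattice of sample points, so the result that divided differences coalesce to ordinary derivatives (cited from \cite{deBoordivided2005, olver2006multivariate} and used repeatedly in the paper) applies, provided $\ulk{\varphi}{k}{[k]}$ depends smoothly on its sample points --- which follows from the explicit formula in Proposition~\ref{prop: discrete_action_computation} (a rational function of $x_i, u_i$ with nonvanishing denominator $\dd{k}$ away from the coalescent locus, extended continuously by the coalescent limits). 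A clean way to handle this is to invoke the independence-of-order property of coalescent limits to coalesce $x_{k+1} \to x_0$ first inside $\ddx$, yielding $\tfrac{1}{k+1}\partial_{x_k}\ulk{\varphi}{k}{[k]}$ evaluated on the remaining lattice, and then let the remaining points coalesce; smoothness of the rational expression in its arguments makes partial derivative and the remaining coalescent limit commute, giving the desired $\tfrac{d}{dx}\varphi_{[k]}$.
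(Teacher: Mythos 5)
Your proposal is correct and follows essentially the same route as the paper: induction on $k$, applying the coalescent limit to the recursion of Theorem~\ref{thm: miraculous_formula}, identifying the limit of the correction term as $u^{(k+1)}\xi^{(1)}$, and closing with equation~\eqref{eq: 1d-recursion-smooth}. In fact you supply more justification for the interchange of the coalescent limit with $\ddx$ than the paper does, which simply passes the limit inside; your only slip is the parenthetical about an extra factor in identifying $\frac{1}{x_{k+1}-x_0}\bigl(S^{k+1}-{\rm id}\bigr)\bigl[\prol{\xi}{0}{0}\bigr]$ with the divided difference $\xi[x_0,x_{k+1}]$ --- it is literally that divided difference, with no factor needed.
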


\begin{proof}
 We begin by observing the basic fact that
 \[\clim_{(x_0,\dots,x_n)\rightarrow x}\prol{u}{k+1}{k+1}\left( \frac{1}{x_{k + 1} - x_0}(S^{k+1} - {\rm id})\bigl[ \prol{\xi}{0}{0} \bigr] \right) = u^{(k+1)}\xi^{(1)},\]
 and we proceed by induction. For our base-case, we may compute
 \begin{gather*}
 \clim_{(x_0,x_1)\rightarrow x}\ulk{\varphi}{1}{[1]} \overset{\eqref{eq: rec_discrete}}{=} \clim_{(x_0,x_1)\rightarrow x}\left( \ddx\bigl[ \ulk{\varphi}{0}{[0]} \bigr] - \prol{u}{1}{1}\left( \frac{1}{x_{1} - x_0}(S - {\rm id})\bigl[ \prol{\xi}{0}{0} \bigr] \right) \right)\\
 \hphantom{\clim_{(x_0,x_1)\rightarrow x}\ulk{\varphi}{1}{[1]}}{}
 \ = \frac{{\rm d}}{{\rm d}x}\Bigl[ \clim_{(x_0,x_1)\rightarrow x}\ulk{\varphi}{0}{[0]}\Bigr] - u^{(1)}\xi^{(1)}
 = \frac{{\rm d}}{{\rm d}x}\bigl[ \varphi \bigr] - u^{(1)}\xi^{(1)}
 \overset{\eqref{eq: 1d-recursion-smooth}}{=} \varphi_{[1]}.
 \end{gather*}
 Then, for our induction, if we assume $\displaystyle\clim\limits_{(x_0,\dots, x_{k})}\ulk{\varphi}{k}{[k]} = \varphi_{[k]}$, we obtain
 \begin{gather*}
 \clim_{(x_0,\dots,x_{k + 1})\rightarrow x}\ulk{\varphi}{k + 1}{[k + 1]}
 \overset{\eqref{eq: rec_discrete}}{=} \clim_{(x_0,\dots,x_{k + 1})\rightarrow x}\left( \ddx\bigl[ \ulk{\varphi}{k}{[k]} \bigl] - \prol{u}{k+1}{k+1}\left( \frac{1}{x_{k+1} - x_0}(S^{k + 1} - {\rm id})\bigl[ \prol{\xi}{0}{0} \bigr] \right) \right)\\
 \hphantom{\clim_{(x_0,\dots,x_{k + 1})\rightarrow x}\ulk{\varphi}{k + 1}{[k + 1]}}{}
 \ = \frac{{\rm d}}{{\rm d}x}\Bigl[ \clim_{(x_0,\dots,x_{k + 1})\rightarrow x}\ulk{\varphi}{k}{[k]}\Bigr] - u^{(k+1)}\xi^{(1)}\\
\hphantom{\clim_{(x_0,\dots,x_{k + 1})\rightarrow x}\ulk{\varphi}{k + 1}{[k + 1]}}{}
 \ = \frac{{\rm d}}{{\rm d}x}\bigl[ \varphi_{[k]} \bigr] - u^{(k + 1)}\xi^{(1)}
 \overset{\eqref{eq: 1d-recursion-smooth}}{=} \varphi_{[k + 1]},
 \end{gather*}
 as desired.
\end{proof}

\begin{Corollary}
 \[\clim_{(x_0,\dots,x_{k + 1})\rightarrow x}\ulk{\varphi}{k+1}{[k+1]} = \frac{{\rm d}}{{\rm d}x}\left[ \varphi_{[k]} \right] - u^{(k+1)}\frac{{\rm d}}{{\rm d}x}\xi.\]
\end{Corollary}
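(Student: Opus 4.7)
The corollary is essentially a direct packaging of Theorem~\ref{thm: clim-formula} with the smooth recursion formula \eqref{eq: 1d-recursion-smooth}, so the plan is short. First I would apply Theorem~\ref{thm: clim-formula} at index $k+1$ to conclude that
\[\clim_{(x_0,\dots,x_{k+1})\rightarrow x}\ulk{\varphi}{k+1}{[k+1]} = \varphi_{[k+1]}.\]
Then I would invoke the smooth recursion \eqref{eq: 1d-recursion-smooth}, which rewrites $\varphi_{[k+1]}$ exactly as $\frac{{\rm d}}{{\rm d}x}\bigl[ \varphi_{[k]} \bigr] - u^{(k+1)}\frac{{\rm d}}{{\rm d}x}\xi$. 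Composing these two identities gives the claim.

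Alternatively, and perhaps more transparently, I would read the statement directly off the induction step in the proof of Theorem~\ref{thm: clim-formula}: in that computation, the intermediate line obtained by applying \eqref{eq: rec_discrete} and then passing to the coalescent limit is precisely
\[\frac{{\rm d}}{{\rm d}x}\Bigl[ \clim_{(x_0,\dots,x_{k+1})\rightarrow x}\ulk{\varphi}{k}{[k]} \Bigr] - u^{(k+1)}\xi^{(1)} = \frac{{\rm d}}{{\rm d}x}\bigl[ \varphi_{[k]} \bigr] - u^{(k+1)}\frac{{\rm d}}{{\rm d}x}\xi,\]
using the inductive hypothesis (from Theorem~\ref{thm: clim-formula}) for the first equality and the fact that $\xi^{(1)} = \frac{{\rm d}}{{\rm d}x}\xi$. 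So the corollary is really just the observation that stopping one line short of the final step in the induction produces this form.

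There is no genuine obstacle here; the only point meriting care is the interchange of $\clim$ with $\frac{\Delta}{\Delta x}$, but that was already carried out (and justified by the basic coalescent-limit fact used at the start of the proof of Theorem~\ref{thm: clim-formula}), so I would simply reference that step rather than redo it. The corollary is therefore best framed as an immediate consequence: combine Theorem~\ref{thm: clim-formula} with formula \eqref{eq: 1d-recursion-smooth}, and the identity follows.
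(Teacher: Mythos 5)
Your proposal is correct and matches the paper's intent exactly: the paper gives no separate proof, treating the corollary as an immediate consequence of Theorem~\ref{thm: clim-formula} combined with the smooth recursion~\eqref{eq: 1d-recursion-smooth}, which is precisely your first argument (and your reading of it off the induction step is the same computation in slightly different packaging).
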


Importantly, Theorem~\ref{thm: clim-formula} tells us that the $k$-th order multispace
prolongation of the action on some curve $(x,u)$ sampled at
points $\left\{ x_0,\dots,x_n \right\}$ yields the equivalence class containing the $k$-th order jet at
each of these points. Put more concisely, multicontact equivalent curves behave well under
multispace prolongation of the group action.
This fact is not obvious from the definitions. Recall that the lift from $M^{(n)}$ to
$M^{(n + 1)}$ is not necessarily unique, so it is entirely possible that the action could lift different
sections of the curve into disjoint equivalence classes. Based on the construction of this space, there is
no obvious reason to suspect that this splitting would happen, but the details still needed to be checked
rigorously.

\subsection{Examples}

With the theory out of the way, it is time to consider a couple of examples for computing the infinitesimals of a given group action.

\begin{Example}\label{ex: scaling}
Consider the elementary group action of $\mathbb{R}^*$ on $\mathbb{R}^2$ given by
\[(\widetilde{x}, \widetilde{u}) = \bigl(\lambda^{-1}x, \lambda u\bigr).\]
In the jet case, the infinitesimals are given by
\[\xi = -x,\qquad \varphi = u \qquad\text{and}\qquad \varphi_{[k]} = (k + 1)u^{(k)}.\]
Computing for the multispace case, we find that
\[
\prol{\xi}{0}{0} = -x_{0},\qquad \prol{\xi}{1}{1} = -1\qquad\text{and}\qquad \prol{\xi}{k}{k} = 0\quad \forall k \geq 2.
\]
We also easily see that $\prol{\varphi}{k}{k} = \prol{u}{k}{k}$ for all $k$, so, using equation~\eqref{eq: rec_discrete}, we obtain
\begin{gather*}
 \ulk{\varphi}{1}{[1]} = \prol{u}{1}{1}-\prol{u}{1}{1}\left( \frac{S-{\rm id}}{x_{1}-x_{0}}[-x_0] \right) = 2 \prol{u}{1}{1},\\
 \ulk{\varphi}{2}{[2]} = 2\,\prol{u}{2}{2}-\prol{u}{2}{2}\left( \frac{S^{2}-{\rm id}}{x_{2}-x_{0}}[-x_0] \right) = 3 \prol{u}{2}{2},\\
\cdots\cdots\cdots\cdots\cdots\cdots\cdots\cdots\cdots\cdots\cdots\cdots\cdots\cdots \\
 \ulk{\varphi}{k}{[k]} = (k + 1) \prol{u}{k}{k}.
\end{gather*}
This is pretty nice, but it is worth noting here that without the recursion relation given in Theorem~\ref{thm: miraculous_formula}, the computations are less kind. Even computing $\ulk{\varphi}{2}{[2]}$ using our original formula established in Proposition~\ref{prop: discrete_action_computation} is mildly troublesome when you keep track of all the notation:
\begin{gather*}
 \ulk{\varphi}{2}{[2]} = \prol{\varphi}{2}{2} - \frac{1}{2!}\sum\limits_{\ell = 0}^2 \ulk{\mu}{2}{\ell}\prol{\bigl(\ell x^{\ell -1}\xi\bigr)}{2}{2}\\
\hphantom{\ulk{\varphi}{2}{[2]}}{}
= \prol{u}{2}{2} - \frac{1}{2!}\left( \frac{2 u_{1} x_{2}^{2}-2 x_{1}^{2} u_{2}-2 u_{0} x_{2}^{2}+2 x_{0}^{2} u_{2}+2 u_{0} x_{1}^{2}-2 x_{0}^{2} u_{1}}{-x_{0}^{2} x_{1}+x_{0}^{2} x_{2}+x_{0} x_{1}^{2}-x_{0} x_{2}^{2}-x_{1}^{2} x_{2}+x_{1} x_{2}^{2}} \right)(0) \\
\hphantom{\ulk{\varphi}{2}{[2]}=}{}
- \frac{1}{2!}\prol{u}{2}{2}\left( \frac{4 x_{0}^{2} x_{1}-4 x_{0}^{2} x_{2}-4 x_{0} x_{1}^{2}+4 x_{0} x_{2}^{2}+4 x_{1}^{2} x_{2}-4 x_{1} x_{2}^{2}}{-x_{0}^{2} x_{1}+x_{0}^{2} x_{2}+x_{0} x_{1}^{2}-x_{0} x_{2}^{2}-x_{1}^{2} x_{2}+x_{1} x_{2}^{2}} \right)\\
\hphantom{\ulk{\varphi}{2}{[2]}}{}
= \prol{u}{2}{2} + 2\prol{u}{2}{2} = 3\prol{u}{2}{2}.
\end{gather*}
\end{Example}

\begin{Example}
Let us now consider another quintessential example: rotation in the plane. This is the action of ${\rm SO}(2)$ on $\mathbb{R}^2$ given by
\[\begin{pmatrix}
 \widetilde{x}\\\widetilde{u}
\end{pmatrix} = \begin{pmatrix}
 \cos(\theta) & -\sin(\theta)\\ \sin(\theta) & \cos(\theta)
\end{pmatrix}\begin{pmatrix}
 x\\u
\end{pmatrix}.\]

In the jet case, the first few infinitesimals are
\begin{gather*}
\xi = -u,\qquad \varphi = x,\qquad\varphi_{[1]} = 1 + \bigl( u^{(1)} \bigr)^2,\\
 \varphi_{[2]} = 3 u^{(1)}u^{(2)},\qquad \varphi_{[3]} = 3\bigl( u^{(2)} \bigr)^2 + 4 u^{(1)}u^{(3)}.
\end{gather*}

For the multispace case, we obtain $\prol{\xi}{k}{k} = -\prol{u}{k}{k}$, and
\[\prol{\varphi}{0}{0} = -x_{0},\qquad \prol{\varphi}{1}{1} = -1 \qquad\text{and}\qquad \prol{\varphi}{k}{k} = 0 \quad \forall k \geq 2.\]

The first few prolonged infinitesimals can now be computed using our recursion relations~\eqref{eq: rec_discrete}
\begin{gather*}
 \ulk{\varphi}{1}{[1]} = 1-\prol{u}{1}{1}\left( \frac{S-{\rm id}}{x_{1}-x_{0}}[-u_0] \right) = 1 + \bigl( \prol{u}{1}{1} \bigr)^2,\\
 \ulk{\varphi}{2}{[2]} = \prol{u}{2}{2}\prol{u}{1}{1} + S\bigl[\prol{u}{1}{1}\bigr] \prol{u}{2}{2} - \prol{u}{2}{2}\left( \frac{S^2-{\rm id}}{x_{2}-x_{0}}[-u_0] \right),
\end{gather*}
and
\begin{gather*}
 \ulk{\varphi}{3}{[3]} = \ddx\bigl[ \ulk{\varphi}{2}{[2]} \bigr] - \prol{u}{3}{3}\left( \frac{1}{x_3-x_0}\bigl( S^3-{\rm id} \bigr)\bigl[ \prol{\xi}{0}{0} \bigr] \right)\\
\hphantom{\ulk{\varphi}{3}{[3]}}{}
 =\frac{S - {\rm id}}{x_3 - x_0}\bigl[ \ulk{\varphi}{2}{[2]} \bigr] - \prol{u}{3}{3}\left( \frac{1}{x_3-x_0}\bigl( S^3-{\rm id} \bigr)\bigl[ \prol{\xi}{0}{0} \bigr] \right)\\
\hphantom{\ulk{\varphi}{3}{[3]}}{}
 = \prol{u}{3}{3}\prol{u}{1}{1} + S\bigl[ \prol{u}{2}{2} \bigr]\left( \frac{3}{2} \right)\left( \frac{x_2 - x_0}{x_3 - x_0} \right)\prol{u}{2}{2} \\
 \hphantom{\ulk{\varphi}{3}{[3]}=}{}
+ \left( \frac{3}{2} \right)\left( \frac{x_3 - x_1}{x_3 - x_0} \right)S\bigl[ \prol{u}{2}{2} \bigr]\prol{u}{2}{2} + S\bigl[ \prol{u}{1}{1} \bigr]\prol{u}{3}{3}- \prol{u}{3}{3}\left( \frac{S^2-{\rm id}}{x_2-x_0} [ -u_0 ] \right)\\
 \hphantom{\ulk{\varphi}{3}{[3]}=}{}
 + S\bigl[ \prol{u}{2}{2} \bigr] \left( \frac{3(S-{\rm id})}{x_3 - x_0}\left[ \frac{S^2 - {\rm id}}{x_2 - x_0} [ -u_0 ] \right]\right)
 - \prol{u}{3}{3}\left( \frac{S^3-{\rm id}}{x_{3}-x_{0}}[-u_0] \right).
\end{gather*}
From this example, it is fairly easy to see that
\begin{gather*}
 \clim_{(x_0,\dots,x_n)\rightarrow x}\ulk{\varphi}{1}{[1]} = 1 + \bigl( u^{(1)} \bigr)^2,\qquad
 \clim_{(x_0,\dots,x_n)\rightarrow x}\ulk{\varphi}{2}{[2]} = 3 u^{(1)}u^{(2)}.
\end{gather*}
The last limit is less trivial, but, thanks to Theorem~\ref{thm: clim-formula}, we know that
\[\clim_{(x_0,\dots,x_n)\rightarrow x}\ulk{\varphi}{3}{[3]} = 3\bigl( u^{(2)} \bigr)^2 + 4\,u^{(1)}u^{(3)}.\]
This is a bit easier to see in the special case of an evenly spaced lattice:
\begin{gather*}
 \ulk{\varphi}{3}{[3]} = \prol{u}{3}{3}\prol{u}{1}{1} + 2 S\bigl[ \prol{u}{2}{2} \bigr]\prol{u}{2}{2} + S\bigl[ \prol{u}{1}{1} \bigr]\prol{u}{3}{3}- \prol{u}{3}{3}\left( \frac{S^2-{\rm id}}{2h} [ -u_0 ] \right)\\
\hphantom{\ulk{\varphi}{3}{[3]} =}{}
 + S\bigl[ \prol{u}{2}{2} \bigr] \left( \frac{3(S-{\rm id})}{3h}\left[ \frac{S^2 - {\rm id}}{2h} [ -u_0 ] \right]\right)
 - \prol{u}{3}{3}\left( \frac{S^3-{\rm id}}{3h}[-u_0] \right).
\end{gather*}
\end{Example}

\section{Conclusion and future work}

At the onset of this paper, we set out to answer three questions with regard to a multispace prolongation of the action of a group on a curve.
%\begin{enumerate}\itemsep=0pt
% \item[(i)] How does the discrete analogue for the prolongation of a group action behave when it is restricted to curves?
% \item[(ii)] What are the infinitesimals for the infinitesimal generators of the action of a group on such a curve?
% \item[(iii)] How do these infinitesimals behave under the analogue of prolongation of a group action (i.e., do they satisfy a similar differential relation to the general prolongation formula given in \cite[p.~110]{olver2000applications})?
%\end{enumerate}
We defined an operator $\ddx$ on the multispace that behaves analogously to the behavior of~$\frac{{\rm d}}{{\rm d}x}$ on jet space. With this operator in hand, we then showed that the natural way of defining a~prolongation of the action on multipointed curves behaves well under the coalescent limit, in the sense that it converges to the expected smooth analogue.

There are two natural ways in which we might expand on this work. The first would be to investigate the
existence of Noether-like theorems within multispace. To this end, the first step would be to determine the
appropriate analogue for the characteristic for a~vector field acting on $M^{(n)}$. The natural guess would
be to take $Q = \prol{\varphi}{0}{0} - \prol{u}{1}{1}\prol{\xi}{0}{0}$; however, this choice does not allow
us to create a general analogue to equation~\eqref{eq: useful-derivative} as would be desired in order to prove a~true-to-form Noether theorem (cf.\ \cite{dorodnitsyn2001307, mansfield2019moving1}).\footnote{The characteristic
formulation is necessary for Lagrangian and variational methods, but it is possible
to obtain Noether-type theorems without using these methods. See \cite{dorodnitsyn2013integrals,
dorodnitsyn2014_first_integrals}.}

The other natural extension would be to try to determine whether these formulas still hold in the case of a multispace of more than one independent variable. The tricky thing here would be the definition of the divided difference equations. While it is generally possible to find a~polynomial surface passing through a given set of points, the issue when it comes to defining a~multispace in the way that we have here is determining (i) the uniqueness of the interpolating polynomial and (ii) the existence of the coalescent limit. Mar\'i Beffa and Mansfield managed to construct such a multispace that accomplishes both of these in \cite{mari2018discrete}, and work by Olver in \cite{olver2006multivariate} seems to suggest that a more general multispace could be constructed, but practical computation of a prolonged action on the more general space is easier said than done and is currently under investigation by the author of this paper.

\subsection*{Acknowledgements}
The author would like to thank Dr.\ Jeanne Clelland for her guidance and support throughout the writing of this
paper. In addition, the author would like to thank the referees for their helpful comments and
suggestions which significantly contributed to the clarity of this paper.

\pdfbookmark[1]{References}{ref}
\LastPageEnding

\end{document}